\setlist[enumerate]{leftmargin=\leftmargini}
\DeclareMathOperator{\pic}{Pic}
\DeclareMathOperator{\pr}{pr}
\DeclareMathOperator{\rk}{rk}
\DeclareMathOperator{\dist}{dist}
\newcommand{\toi}{\hookrightarrow}
\newcommand{\isoto}{\overset{\sim}{\to}}
\newcommand{\Cc}{\mathbb{C}}
\newcommand{\Ff}{\mathbb{F}}
\newcommand{\Ii}{\mathbb{I}}
\newcommand{\Qq}{\mathbb{Q}}
\newcommand{\Pp}{\mathbb{P}}
\newcommand{\Rr}{\mathbb{R}}
\newcommand{\Zz}{\mathbb{Z}}
\newcommand{\cp}{\mathcal{P}}
\newcommand{\cR}{\mathcal{R}}
\newcommand{\cq}{\mathcal{Q}}
\newcommand{\cb}{\mathcal{B}}
\newcommand{\co}{\mathcal{O}}
\newcommand{\dR}{\mathrm{dR}}
\newcommand{\red}{\mathrm{red}}
\newcommand{\hodge}{\mathrm{Hdg}}
\newcommand{\alg}{\mathrm{Alg}}
\renewcommand{\H}{\mathrm{H}}
\newcommand{\p}{\Pp^1}
\newcommand{\ppp}{\Pp^3}
\newcommand{\st}{\ \middle|\ }
\newcommand{\norm}[1]{\lVert #1 \rVert}
\newcommand\eqdef{\smash{\overset{\underset{\mathrm{def}}{}}{=}}}
\def\imod#1{\allowbreak\mkern10mu({\operator@font mod}\,\,#1)}
\def\ud{\,\textrm{d}}
\def\epsilon{\varepsilon}
\def\phi{\varphi}
\title{A numerical transcendental method in algebraic geometry}
\author{Pierre Lairez\thanks{Inria, France} \and Emre Can Sert\"oz\thanks{Max
    Planck Institute MiS,
    Leipzig, Germany}}
\date{\today}
\begin{document}

\maketitle

\begin{abstract}
  Based on high precision computation of periods and lattice reduction techniques, we compute the Picard group of smooth surfaces in~$\mathbb{P}^3$. We also study the lattice reduction technique that is employed in order to quantify the possibility of numerical error in terms of an intrinsic measure of complexity of each surface. The method applies more generally to the computation of the lattice generated by Hodge cycles of middle dimension on smooth projective hypersurfaces. We demonstrate the method by a systematic study of thousands of quartic surfaces (K3s) defined by sparse polynomials. As an application, we count the number of rational curves of a given degree lying on each surface. For quartic surfaces we also compute the endomorphism ring of their transcendental lattice.
\end{abstract}

\section{Introduction}

In ``A transcendental method in algebraic geometry'' \cite{Griffiths_1971},
Griffiths emphasized the role of certain multivariate integrals, known as \emph{periods},
``to construct a continuous invariant of arbitrary smooth projective varieties''.
Periods often determine the projective variety completely and therefore its algebraic invariants. Translating periods into discrete algebraic invariants is a difficult problem, exemplified by the long standing Hodge conjecture which describes how periods determine the algebraic cycles within a projective variety.

Recent progress in computer algebra makes it possible to compute periods with high precision and put transcendental methods into practice. We focus mainly on algebraic surfaces and give a numerical method to compute Picard groups. As an application, we count smooth rational curves on quartic surfaces using the Picard group. These methods apply more generally to hypersurfaces in a projective space of arbitrary dimension and we show some examples.

\subsubsection*{Structure of the Picard group and main results}
There are many curves in a smooth surface $X \subset \ppp_\Cc$, the basic ones are those obtained by intersecting $X$ with another surface $S$ in $\ppp_\Cc$. If $S_1$ and $S_2$ are two surfaces of the same degree then the curve
$C_1 = S_1 \cap X$ can be deformed into the curve $C_2 = S_2 \cap X$ by varying continuously
the coefficients of the defining equation of $S_1$. The curves $C_1$ and $C_2$
are said to be \emph{linearly equivalent}. The notion of linear
equivalence extends to formal $\Zz$-linear combinations of curves and the Picard
group of $X$ is defined by
\[
  \pic(X) \eqdef \Zz \langle \text{algebraic curves in $X$} \rangle / \langle \text{linear equivalence relations} \rangle.
\]
The Picard group is an algebraic invariant that reflects the nature of the algebraic curves lying on~$X$.
It is a free abelian group, that is,
$\pic(X) \simeq \Zz^{\rho}$ for a positive integer $\rho$ called the
\emph{Picard number} of $X$.
As Zariski wrote, ``{The evaluation of $\rho$ for a given surface
  presents in general grave difficulties}''
\cite[p.~110]{zariski--algebraic_surfaces}. 

There is more to the Picard group than the Picard number. The intersection product, which for any two curves $C_1$ and $C_2$ in $X$ associates an integer $C_1\cdot C_2$, induces a bilinear map $\pic(X) \times \pic(X) \to \Zz$.
The intersection product is an intrinsic algebraic invariant of $X$ that is finer than the Picard number. 
There is also an extrinsic invariant in the Picard group, called the \emph{polarization}, recording much of the geometry of $X$ within $\ppp_\Cc$. The polarization is the linear equivalence class of any curve obtained by intersecting $X$ with a plane in $\ppp_\Cc$. The problem we address is then the following:

  \emph{Given the defining equation of $X$, compute the Picard number~$\rho$ of~$X$,
  the $\rho\times\rho$ matrix of the intersection product and the $\rho$
  coordinates of the polarization in some basis of~$\pic(X) \simeq \Zz^\rho$.  }

We approach the problem using \emph{transcendental methods}, that is, we use the complex geometry of the hypersurfaces and compute multivariate integrals on topological cycles, namely the \emph{periods}. 
For surfaces, Lefschetz $(1,1)$ theorem identifies the Picard group of a surface with the
lattice of integer linear relations between periods.
The rank, intersection product and polarization of the Picard group can be computed from
a high precision computation of the periods \cite{sertoz18}
and well-established techniques in lattice reduction. We apply these techniques also 
to the computation of the endomorphism ring of the transcendental lattice in order to compute Charles' gap~\cite{Charles_2014}, see below.
Counting rational curves of a given degree lying on a surface is an interesting
application of the computation of the Picard group with its intersection product and polarization.

The method extends to higher dimensional hypersurfaces in order to compute the group of Hodge cycles.
For a hypersurface $X$ in a projective space of odd dimension $\Pp^{2k+1}_\Cc$ with $k > 1$ there are two interesting objects to study, replacing the Picard group for surfaces under present discussion:  the group of algebraic cycles $\alg^k(X)$ generated by the cohomology classes of $k$-dimensional algebraic subvarieties of $X$ or the group of Hodge cycles $\hodge^k(X)$ generated by integral linear relations between periods. The Hodge conjecture states in greater generality that, after tensoring with rational numbers, the two groups $\alg^k(X)\otimes_\Zz \Qq$ and $\hodge^k(X)\otimes_\Zz \Qq$ coincide~\cite{deligne--millenium}. The resolution of this conjecture is one of the seven Millennium Prize Problems posed by the Clay Institute. Let us point out that this problem is far from being resolved even for hypersurfaces beyond the case of surfaces in $\ppp_\Cc$. Perhaps the current method will allow for experimentation in this direction with the ability to compute $\hodge^k(X)$ together with its intersection product and polarization, see \S\ref{sec:higher_degree_dimension}.

\subsubsection*{Related work}

For surfaces of degree at most three the Picard group does not depend on the defining equations and
the main arguments to compute it have been known since the 19th century~\cite{dolgachev--luigi}.
Starting with surfaces of degree four, the Picard group is sensitive to the defining equations and poses an entirely different kind of challenge,
where a complete solution must be algorithmic in nature.
Noether and Lefschetz~\cite{lefschetz--analysis_situs} proved that a very general quartic surface is expected to have Picard number one.
However, the first quartic with $\rho=1$ defined by a polynomial with integer coefficients appeared in 2007 with van Luijk's seminal paper~\cite{vanLuijk_2007} where he used techniques involving reduction to finite characteristic.
Since then the reduction techniques have been going through a phase of rapid development. The original argument of van Luijk was refined by Elsenhans and Jahnel \cite{ElsenhansJahnel_2011,ElsenhansJahnel_2011a} but the computational bottle neck persisted: in working with surfaces over finite fields the computation of their Zeta function initially required the expensive process of point counting. This bottleneck has been alleviated using ideas from $p$-adic cohomology and gave rise to two different approaches: one dealing directly with the surface \cite{kedlaya04,AbbottKedlayaRoe_2010,CostaHarveyKedlaya_2018} and another which deforms the given surface to a simpler one \cite{lauder04,pacratz-tuitman}. 

To complement the upper bounds coming from prime reductions, lower bounds on the Picard number can be obtained, at least in theory, by enumerating all the algebraic curves in~$X$. One could compute infinite sequences of lower and upper bounds that may eventually determine the Picard number. However, Charles~\cite{Charles_2014} proved that the upper bounds obtained from finite characteristic could significantly overestimate the Picard number and he expressed the gap in terms of the endomorphism ring of the transcendental lattice, see \S\ref{sec:transcendental}. In practice, computing this gap appears to be just as difficult as the computation of the Picard number. However, Charles demonstrated at last that the Picard number of a K3 surface (e.g. a quartic surface) defined with coefficients in a number field is computable. On the theoretical side, effective algorithms have been developed with a broader reach but with low practicability~\cite{Charles_2014,HassettKreschTschinkel_2013,PoonenTestavanLuijk_2015}. There is recent work addressing the issue of practicability~\cite{Festi_2018}.

Concerning numerical methods, high precision computation of periods has been successfully applied to many problems concerning algebraic curves e.g.~\cite{vanWamelen_1999,nils-18}, even with the possibility of \emph{a posteriori} symbolic certification~\cite{CostaMascotSijslingEtAl_2018}.

\subsubsection*{Tools} For the purpose of exposition, we focus mainly on quartic surfaces.
Our techniques, as well as our code, work for hypersurfaces of any degree and dimension, given sufficient computational resources.
The main computational tool on the one hand is an algorithm to approximate periods~\cite{sertoz18},
based on Picard--Fuchs differential equations~\cite{Picard_1902b} and
algorithms to compute them \cite[e.g.]{Chyzak_2000,Koutschan_2010a,Pancratz_2010,Movasati_2007a,Lairez_2016,BostanLairezSalvy_2013} via numerical analytic continuation \cite{Hoeven_2007,Hoeven_2001,ChudnovskyChudnovsky_1990,Mezzarobba_2010,Mezzarobba_2016}.
On the other hand, we use algorithms to compute integer linear relations between
vectors of real numbers \cite{FergusonForcade_1979,LenstraLenstraLovasz_1982,HastadJustLagariasEtAl_1989,BuchmannPohst_1989,FergusonBaileyArno_1999,ChenStehleVillard_2013}.

We used the computer algebra system {\tt Sagemath}~\cite{sagemath} with {\tt ore\_algebra-analytic}\footnote{\url{http://marc.mezzarobba.net/code/ore_algebra-analytic}} for performing numerical analytic continuation~\cite{Mezzarobba_2016} and {\tt Magma}~\cite{BosmaCannonPlayoust_1997},
with {\tt period-suite}\footnote{\url{https://github.com/period-suite/period-suite}}~\cite{sertoz18} and {\tt periods}\footnote{\url{https://github.com/lairez/periods}}~\cite{Lairez_2016}.

\subsubsection*{The reliability of numerical computations}
Although the periods vary continuously with the coefficients of the defining
equation of a surface, the Picard number is nowhere continuous for surfaces of degree at least~$4$ \cite{NL_locus_is_dense}, behaving like the indicator function
of the rational numbers on the real line.
This fact suggests that deducing the Picard group from approximate periods must be hopeless, since the numerical computation of the Picard group is based on finding integer relations between real numbers that we know only with finite precision. 
However, working with sparse polynomials with small rational coefficients, we observe a remarkable tolerance for error, see \S\ref{sec:an_example} for a typical example. Our computations agree with the literature whenever a check is possible. In particular, we compared our Picard number computations against {\tt controlledreduction}\footnote{\url{https://github.com/edgarcosta/controlledreduction}}~\cite{costa-phd} and Shioda's algorithm for Delsarte surfaces~\cite{shioda--delsarte}. 

The possibility of error and its nature is quantified precisely in~\S\S \ref{sec:numer-reconstr-picar}--\ref{sec:intrinsic_error} and compared to an intrinsic measure of complexity of the surface. This measure of complexity is out of reach but its determination would allow certification of the numerical computation.

\subsubsection*{Outline} 

In Section~\ref{sec:periods_and_picard}, we describe the computation of the Picard group of surfaces and the endomorphism ring of the transcendental lattice of quartic surfaces from an approximation of periods.
In Section~\ref{sec:P1inK3} we apply our computations to count smooth rational curves in quartic
surfaces. %
In Section~\ref{sec:numer-reconstr-picar} we describe and analyze a standard procedure to recover integer relations between approximate real vectors.
In Section~\ref{sec:intrinsic_error} we quantify the nature of error in a way that is independent of the methods employed and express it in terms of an intrinsic measure of complexity of the given surface.
Section~\ref{sec:higher_degree_dimension} explains the situation for higher dimensional hypersurfaces where the general idea of the method as explained in Section~\ref{sec:periods_and_picard} applies verbatim. Section~\ref{sec:experiment} summarizes the experimental results obtained for thousands of quartic surfaces.
Section~\ref{sec:completing_homology} explains how we compute the polarization by fleshing out the argument given in~\cite{sertoz18}.

\subsubsection*{Acknowledgments} 

We would like to thank Bernd Sturmfels for putting us together, for his guidance
and constant support. We also would like to thank Alex Degtyarev for explaining
to us how to count rational curves in a K3 lattice. We are also grateful for
insightful conversations with Simon Brandhorst, John Cannon, Edgar Costa,
Stephan Elsenhans, Jon Hauenstein, Marc Mezzarobba, Mateusz Micha{\l}ek, Matthias Sch\"utt and Don Zagier.

\section{Periods and Picard group}\label{sec:periods_and_picard}

\subsection{Principles}
\label{sec:principles}

Following Picard, Lefschetz and Hodge, algebraic curves on a smooth complex surface~$X$ can be
characterized among all topological $2$-dimensional cycles of~$X$ in terms of
multivariate integrals (for a historical perspective, see
\cite{movasati--hodge} and~\cite{Houzel_2002}).

Let~$X\subset \ppp$ be a smooth complex surface.
An algebraic curve $C \subset X$ is supported on a topological $2$-dimensional
cycle. Lefschetz proved that two algebraic curves are topologically homologous
if and only if they are linearly equivalent
\cite{lefschetz--analysis_situs}, see also \cite[Chap.~9]{movasati--hodge}.
In other words, the Picard group comes with a natural inclusion into the homology group
\begin{equation}\label{eq:pic_in_H}
  \pic(X) \hookrightarrow \H_2(X,\Zz).
\end{equation}
This homology group is a topological invariant that depends only on the degree
of~$X$, while $\pic(X)$ is a much finer invariant of $X$.

Recall that for a~$2$-dimension cycle~$\gamma$ and any holomorphic differential $2$-form~$\omega$ on~$X$, the integral~$\int_\gamma \omega$ is well defined on the homology class of $\gamma$.

\begin{theorem}[Lefschetz (1,1) theorem]\label{thm:lefschetz11}
  A homology class $\gamma \in \H_2(X,\Zz)$ is in $\pic(X)$ if and only if $\int_\gamma \omega = 0$ for every holomorphic $2$-form~$\omega$ on~$X$.
\end{theorem}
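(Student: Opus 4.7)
The plan is to pass from homology to cohomology and then use the exponential sheaf sequence together with the Hodge decomposition to identify $\pic(X)$ inside $\H^2(X,\Zz)$.

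First, I would use Poincar\'e duality for the smooth compact oriented $4$-manifold $X$ to identify $\H_2(X,\Zz)$ with $\H^2(X,\Zz)$. Under this identification, integration $\gamma \mapsto \int_\gamma \omega$ becomes the pairing of the cohomology class $[\gamma]\in\H^2(X,\Cc)$ with $\omega \in \H^{2,0}(X)$ under the perfect pairing $\H^2(X,\Cc)\times\H^2(X,\Cc)\to\Cc$ induced by cup product. Combined with the Hodge decomposition
\[
  \H^2(X,\Cc) = \H^{2,0}(X)\oplus\H^{1,1}(X)\oplus\H^{0,2}(X),
\]
and the fact that the pairing puts $\H^{p,q}$ in duality with $\H^{2-p,2-q}$, the vanishing condition $\int_\gamma \omega = 0$ for every holomorphic $2$-form $\omega$ translates to the $(0,2)$-component of $[\gamma]$ being zero. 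Since $[\gamma]$ is real, complex conjugation exchanges $\H^{2,0}$ and $\H^{0,2}$, so the $(2,0)$-component vanishes as well; hence $[\gamma] \in \H^{1,1}(X)\cap \H^2(X,\Zz)$.

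Second, I would bring in the exponential sheaf sequence $0\to\Zz\to\co_X\to\co_X^*\to0$, whose long exact sequence contains
\[
  \H^1(X,\co_X) \too \H^1(X,\co_X^*) \overset{c_1}{\too} \H^2(X,\Zz) \too \H^2(X,\co_X).
\]
On one hand, $\pic(X) = \H^1(X,\co_X^*)$ classifies line bundles, and the map $c_1$ sends a divisor class to its Poincar\'e dual homology class; this matches the inclusion $\pic(X)\hookrightarrow\H_2(X,\Zz)$ of~\eqref{eq:pic_in_H}. On the other hand, Dolbeault's theorem identifies $\H^2(X,\co_X)$ with $\H^{0,2}(X)$, and under this identification the map $\H^2(X,\Zz)\to\H^2(X,\co_X)$ is the projection onto the $(0,2)$-component. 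Exactness therefore gives
\[
  \mathrm{image}(c_1) \;=\; \ker\bigl(\H^2(X,\Zz)\to\H^{0,2}(X)\bigr) \;=\; \H^2(X,\Zz)\cap\H^{1,1}(X),
\]
where the second equality again uses reality to rule out a $(2,0)$-part once the $(0,2)$-part is gone.

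Combining the two steps yields the equivalence. The main obstacle is the exactness step: one must know that the connecting homomorphism $c_1$ agrees, up to sign, with the topological cycle class obtained by Poincar\'e-dualizing the fundamental class of a divisor, and that $\pic(X)$ as defined here (algebraic curves modulo linear equivalence) coincides with $\H^1(X,\co_X^*)$. These are standard but non-trivial compatibilities, requiring the GAGA principle and a careful \v{C}ech-de Rham comparison; everything else is a direct consequence of the Hodge decomposition and Poincar\'e duality.
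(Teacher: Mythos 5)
Your argument is correct, and it is the standard modern proof of the Lefschetz $(1,1)$ theorem: Poincar\'e duality plus the Hodge decomposition to translate the vanishing of periods into the condition $[\gamma]\in\H^{1,1}(X)\cap\H^2(X,\Zz)$, then the exponential sequence $0\to\Zz\to\co_X\to\co_X^*\to 0$ to identify that group with the image of $c_1\colon\H^1(X,\co_X^*)\to\H^2(X,\Zz)$. Note, however, that the paper does not prove this theorem at all: it is quoted as a classical result (with references to Lefschetz and to standard texts), since for the paper's purposes only the statement in the form of equations \eqref{eq:1} and \eqref{eq:2} is needed. So there is no ``paper proof'' to compare against; your proposal fills in what the paper delegates to the literature. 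The points you flag as the main obstacles are indeed the only nontrivial compatibilities: that the connecting map $c_1$ agrees with the topological cycle class of a divisor under Poincar\'e duality, that the map $\H^2(X,\Zz)\to\H^2(X,\co_X)\simeq\H^{0,2}(X)$ is the $(0,2)$-projection under the Dolbeault and de Rham comparisons, and that $\pic(X)$ as defined in the paper (curves modulo linear equivalence) coincides with the group of holomorphic line bundles, which uses GAGA together with the fact that on a smooth projective surface every line bundle is $\co_X(D)$ for a divisor $D$; for surfaces in $\ppp$ one also has $\H^1(X,\co_X)=0$, which gives the injectivity implicit in \eqref{eq:pic_in_H}. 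All of these are standard, so your sketch is sound; a fully self-contained write-up would simply expand those three compatibilities.
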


When~$X$ has degree~$4$, $\H_2(X,\Zz) \simeq \Zz^{22}$ and~$X$
admits a unique non-zero holomorphic $2$-form up to
scaling, which we denote~$\omega_X$.
Given a basis~$\gamma_1,\dotsc,\gamma_{22}$ of~$\H_2(X,\Zz)$, Theorem~\ref{thm:lefschetz11} rewords as
\begin{equation}\label{eq:1}
 \pic(X) = \left\{ (a_1,\dotsc,a_{22}) \in \Zz^{22} \st \sum_{i=1}^{22} a_i
    \int_{\gamma_i} \omega_X = 0 \right\}.
\end{equation}
The integrals~$\int_{\gamma_i} \omega_X$ appearing here are called the \emph{periods} of~$X$.
All periods can be expressed as a sum of integrals in the affine chart $\Cc^3=\{w=1\} \subset \ppp$. For a 2-cycle $\gamma \subset X \cap \{w=1\}$ we can form a thin tube $\tau \subset \Cc^3 \setminus X$ around $\gamma$ so that 
\begin{equation} \int_{\gamma} \omega_X = \frac{1}{2\pi \sqrt{-1}}\int_{\tau} \frac{\ud x\ud y\ud z}{f(x,y,z,1)}, 
\end{equation}
where~$f$ is the degree~$4$ homogeneous polynomial defining~$X$ \cite{griffiths--periods}.

In general, when~$X$ has degree~$d \geq 4$,
$\H_2(X,\Zz)$ has rank~$m=d^3-4d^2+6d-2$ and the space $V$ of holomorphic $2$-forms on~$X$ is of dimension $r=\binom{d - 1}{3}$. Fixing bases $\H_2(X,\Zz)=\Zz\langle \gamma_1,\dots,\gamma_m \rangle$, $V=\Cc\langle \omega_1,\dots,\omega_r \rangle$ and applying Lefschetz~(1,1) theorem we get: 
\begin{equation}\label{eq:2}
  \pic(X) = \left\{ (a_1,\dotsc,a_{m}) \in \Zz^{m} \st \forall 1\leq j \leq r,\, \sum_{i=1}^{m} a_i
    \int_{\gamma_i} \omega_j = 0 \right\}.
\end{equation}

In view of~\eqref{eq:1} and~\eqref{eq:2} we can determine $\pic(X)$ by computing the matrix of periods $[\int_{\gamma_j}\omega_i]_{i,j}$ and then finding integer linear relations between the rows. The algorithm presented in \cite{sertoz18} to compute the periods of $X$ takes care of the first step. We may then use lattice reduction algorithms~\cite[p.~525]{LenstraLenstraLovasz_1982} to find generators for $\pic(X)$.

We briefly recall how periods of $X$ are computed in \cite{sertoz18}. The surface $X$ is put into a single parameter family of surfaces containing the Fermat surface $Y = \{x^d + y^d + z^d + w^d = 0\}$. The matrix of periods along the family vary holomorphically in terms of the parameter and these entries satisfy ordinary differential equations which are computed exactly. The value of this one parameter period matrix---as well as its derivatives---at $Y$ are given by closed formulas involving Gamma functions. The differential equations together with the periods on $Y$ expresses the periods of $X$ as the solution to an initial value problem. This initial value problem is solved using Mezzarobba's implementation of numerical analytic continuation~\cite{Mezzarobba_2016} to arbitrary precision with rigorous error bounds.

The reconstruction of integer relations between transcendental numbers that are only approximately given is not possible in general. However, when the transcendental numbers are well behaved, this reconstruction may be possible. We devote \S\ref{sec:numer-reconstr-picar} to the study of this problem.

\subsection{An example}\label{sec:an_example}

Consider the quartic surface $X \subset \ppp$ defined by the polynomial
\begin{equation}
  f= 3x^3z - 2x^2y^2 + xz^3 - 8y^4 - 8w^4.
\end{equation}
As described above, we may compute a $1\times 22$ matrix of periods to arbitrary precision. In this example, the differential equations that arise are of order $5$ with polynomial coefficients of degree at most $59$. On a laptop, the determination of this differential equation takes about two seconds and it takes $30$ seconds to integrate it to $100$ digits of accuracy, with rigorous error bounds. All of this can be done with the command {\tt PeriodHomotopy([f])} using the package {\tt period-suite} written for {\tt Magma} \cite{BosmaCannonPlayoust_1997} and utilizing~{\tt ore\_algebra-analytic}~\cite{Mezzarobba_2016}.

\begin{figure}[t]
  \centering
  \begin{equation*}
    \setcounter{MaxMatrixCols}{30}
    \def\minusshort{\scalebox{0.75}[1.0]{$-$}}
    \tiny
    \arraycolsep=.4pt\def\arraystretch{1}
    \left[\begin{array}{rrrrrrrrrrrrrrrrr@{\hspace{1em}}r}
      \phantom{\minusshort}0 & \phantom{\minusshort}0 & \phantom{\minusshort}0 & \phantom{\minusshort}0 & \phantom{\minusshort}0 & \phantom{\minusshort}0 & \phantom{\minusshort}0 & \phantom{\minusshort}0 & \phantom{\minusshort}0 & \phantom{\minusshort}0 & \phantom{\minusshort}0 & \phantom{\minusshort}0 & \phantom{\minusshort}0 & \phantom{\minusshort}0 & \minusshort1669083212117905913652734 & \phantom{\minusshort}0 & 1937019641160560221317687 & \ldots \\
            0 & 0 & 0 & 0 & 0 & 0 & 0 & 0 & 0 & 0 & 0 & 0 & 0 & 0 & 0 & 1669083212117905913652734 & 1937019641160560221317687 & \ldots \\[-.4em]
            \multicolumn{18}{r}{\hrulefill}\\ 
      1 & 0 & 0 & \minusshort1 & 0 & 0 & 0 & 1 & 1 & 0 & 0 & 0 & 0 & 0 & \minusshort146511829901195443671789 & 84478429044587822467823 & \minusshort365980228690630104919296 & \ldots \\ 
      0 & 0 & 0 & 0 & 1 & 0 & 0 & 0 & 0 & 0 & 0 & 0 & 0 & 0 & \minusshort337167720252678310258177 & 224110151973403946221421 & \minusshort743116955936487279910552 & \ldots \\
      0 & 0 & 0 & 0 & 0 & 0 & 0 & 0 & 0 & 0 & 0 & 0 & 1 & \minusshort1 & 357031479253522311483650 & 768066337666351099432748 & 940525994719391079998435 & \ldots \\
      0 & 0 & 0 & 0 & 0 & 1 & 0 & 0 & 1 & 0 & 1 & 0 & 0 & 0 & \minusshort552756671828854153114905 & \minusshort126018248279583585486071 & 535095811953165917210863 & \ldots \\
      0 & \minusshort1 & 1 & 0 & 0 & 0 & 0 & 0 & 1 & 0 & 0 & \minusshort1 & 0 & 0 & 104335431129908645825133 & \minusshort231616284585318363570849 & 502730408585962411025306 & \ldots \\
      0 & 0 & 0 & 0 & 0 & 0 & 0 & 0 & 0 & 0 & 0 & 0 & 0 & \minusshort1 & \minusshort649159586430203173692632 & 770784867967071100945665 & \minusshort2152014469737999315531272 & \ldots \\
      0 & 0 & 0 & 0 & 0 & 0 & 0 & 0 & 0 & 1 & 1 & 0 & 0 & 0 & 277747983934797690835205 & \minusshort28625739873061372966384 & \minusshort638732179408358479990097 & \ldots \\
      1 & 0 & 0 & 0 & 0 & 0 & 0 & 0 & 0 & 0 & 0 & 1 & 0 & 0 & 146511829901195443671790 & \minusshort84478429044587822467823 & 365980228690630104919296 & \ldots \\
      0 & 0 & 0 & 0 & 0 & 0 & 0 & 0 & 0 & 0 & 0 & \minusshort1 & 1 & 1 & 250899146775406645936761 & 575615030011256031395007 & \minusshort114830012426104078247291 & \ldots \\
      0 & 1 & 0 & 0 & 0 & 0 & 0 & 1 & 0 & 0 & \minusshort1 & 0 & 0 & 0 & 104335431129908645825133 & \minusshort231616284585318363570849 & 502730408585962411025307 & \ldots \\
      0 & 0 & 0 & 0 & 0 & 0 & \minusshort1 & 0 & 0 & 0 & 0 & 0 & 1 & \minusshort1 & \minusshort140644950443454586919439 & \minusshort393058206212350140614235 & 429933080833930208291557 & \ldots \\
      0 & 0 & 0 & 0 & 0 & 0 & 0 & 0 & 1 & 0 & 0 & 0 & 0 & 0 & 594933070600140950961561 & 273156103820314126589096 & \minusshort671845991848498223316874 & \ldots \\
      0 & 0 & 0 & 0 & 1 & 0 & 0 & \minusshort1 & 0 & 0 & 0 & 0 & 0 & 0 & 337167720252678310258177 & \minusshort224110151973403946221421 & 743116955936487279910552 & \ldots \\
      0 & 0 & 0 & 0 & 0 & 0 & 0 & 0 & 0 & 0 & 0 & 0 & 0 & 1 & \minusshort824317154838996681984621 & 177119763197465887754938 & \minusshort236792300924643740702432 & \ldots \\
      0 & 0 & 0 & 0 & 0 & 0 & 0 & 1 & 0 & 0 & 1 & 0 & 0 & 0 & 379344119023965108104833 & \minusshort76972296432673405118395 & 606366776041154973804541 & \ldots \\
      0 & 0 & 0 & 0 & 0 & 1 & 0 & 0 & 0 & 0 & 0 & 0 & 0 & 0 & 552756671828854153114905 & 126018248279583585486070 & \minusshort535095811953165917210864 & \ldots \\
      0 & 0 & 0 & 0 & 0 & 0 & 1 & 0 & 0 & 0 & 0 & 0 & 0 & \minusshort1 & \minusshort140644950443454586919440 & \minusshort393058206212350140614234 & 429933080833930208291557 & \ldots \\
      0 & 0 & 1 & 0 & 0 & 0 & 0 & 0 & 0 & 0 & 0 & 0 & 0 & 0 & \minusshort104335431129908645825133 & 231616284585318363570849 & \minusshort502730408585962411025307 & \ldots \\
      0 & 0 & 0 & 0 & 0 & 0 & 0 & 0 & 0 & 0 & 0 & 0 & 1 & 0 & \minusshort467285675585474370500971 & \minusshort950623161465256990213520 & \minusshort1255629063127217210042702 & \ldots \\
      0 & 0 & 0 & 1 & 0 & 0 & 0 & 0 & 0 & 0 & 0 & 0 & 0 & 0 & \minusshort146511829901195443671790 & 84478429044587822467823 & \minusshort365980228690630104919296 & \ldots \\
      0 & 0 & 0 & 0 & 0 & 0 & 0 & 0 & 0 & 1 & 0 & \minusshort1 & 0 & 0 & \minusshort277747983934797690835206 & 28625739873061372966384 & 638732179408358479990097 & \ldots \\
      0 & 0 & 0 & 0 & 0 & 0 & 0 & 0 & 0 & 0 & 0 & 1 & 0 & 0 & \minusshort69025235930677842745100 & 457102914343586863258366 & 660652346877586707848817 & \ldots \\
\end{array}\right]
  \end{equation*}
  \caption{Lattice of integer relations between approximate periods. The last $5$ columns are omitted.}
 \label{fig:candidate-integer-rels} 
\end{figure}

Applying the LLL algorithm to these approximate periods of $X$ gives a basis of integer relations between the approximate periods. More precisely, we consider for the lattice $\Lambda$ of integer vectors
$(u, v, a_1,\dotsc,a_{22}) \in \Zz^{24}$ satisfying
\begin{equation}\label{eq:lattice_relation}
\sum_{i=1}^{m} a_i \left[ 10^{100} \int_{\gamma_i} \omega_X \right] = u + v\sqrt{-1},
\end{equation}
where~$[-]$ denotes the rounding to nearest integer. Equation \eqref{eq:lattice_relation} should be compared with \eqref{eq:1}.
Short vectors in~$\pic(X)$ give rise to short vectors in~$\Lambda$, and a short vector in~$\Lambda$ is likely to come from a vector in~$\pic(X)$, unless a suprising numerical cancellation happens.

Concerning the example, Figure~\ref{fig:candidate-integer-rels} shows a matrix whose columns form a LLL-reduced basis for the lattice $\Lambda$.
We observe an important gap in size, between the 14th and 15th column.
We conclude that the Picard number of~$X$ is most likely~$14$ and that the columns of the lower left $22\times 14$ submatrix is a basis of~$\pic(X)$.
The norm of the first dismissed column, about~$10^{25}$, fits precisely the expected situation described in Proposition~\ref{prop:heuristic}.

This numerical approach may fail in two ways: either by missing a relation or by returning a false relation which nevertheless holds up to high precision. Proposition~\ref{prop:gap} quantifies the way in which such a failure may occur: either the computation of~$\pic(X)$ is correct; or~$\pic(X)$ is not generated by elements of norm $<10^{20}$; or there is some $(a_i) \in\Zz^m$ with $\sum_i a_i^2 \leq 4$ such that $\big|\sum_{i} a_i \int_{\gamma_i} \omega_X\big|$ is not zero but smaller than~$\smash{10^{-99}}$.
Section~\ref{sec:intrinsic_error} expresses these quantities in terms of an intrinsic norm that is independent from the coordinates that were used to carry out the computations.

The intersection product on $\pic(X)$ is readily computed from the generators, as we now describe. The basis of homology on $X$ is obtained by carrying a basis from the Fermat surface $Y$ by parallel transport. On $Y$ the intersection numbers~$\gamma_i \cdot \gamma_j$ are known exactly as well as the polarization, i.e., the coordinates of the homology class of a general place section~$H\cap X$ in the basis~$\{\gamma_i\}_{i=1}^{22}$, see \S\ref{sec:completing_homology}. As these values remain constant during parallel transport, we know the intersection product on the homology of $X$ as well as the polarization. Computing the intersection product of the $14$ generators of~$\pic(X)$ in homology, we obtain the intersection product on $\pic(X)$. Since the polarization lies in $\pic(X)$ we express it in terms of these generators of $\pic(X)$. The result of this operation is displayed in Figure~\ref{fig:candidate_intersection_product}. The command {\tt HodgeLattice} of {\tt period-suite} performs all the operations starting from the computation of the periods.  

\begin{figure}[t]
  \small
  \begin{align*}
    \left[\begin{array}{rrrrrrrrrrrrrr}
            -4 & 0  & 0  & 2  & 2  & 2  & 0  & -3 & -1 & -2 & 0   & -1 & 1  & -1 \\
            0  & -4 & 2  & 0  & -1 & 1  & 2  & -1 & 2  & 0  & 4   & -2 & 0  & 0 \\
            0  & 2  & -4 & 0  & 2  & 0  & -2 & -1 & -1 & 0  & 0   & 2  & 1  & -1 \\
            2  & 0  & 0  & -4 & -1 & -1 & 0  & 3  & 2  & 0  & 0   & 0  & 0  & 0 \\
            2  & -1 & 2  & -1 & -4 & 0  & 1  & 3  & 1  & 2  & 1   & 0  & -1 & -1 \\
            2  & 1  & 0  & -1 & 0  & -4 & -1 & 1  & -1 & 2  & -3  & -1 & 1  & 3 \\
            0  & 2  & -2 & 0  & 1  & -1 & -4 & 1  & -2 & 0  & -2  & 0  & 2  & 2 \\
            -3 & -1 & -1 & 3  & 3  & 1  & 1  & -6 & -1 & -1 & 1   & 0  & 1  & -1 \\
            -1 & 2  & -1 & 2  & 1  & -1 & -2 & -1 & -4 & 1  & -2  & 0  & 2  & 0 \\
            -2 & 0  & 0  & 0  & 2  & 2  & 0  & -1 & 1  & -4 & 0   & 1  & -1 & 1 \\
            0  & 4  & 0  & 0  & 1  & -3 & -2 & 1  & -2 & 0  & -10 & -1 & 0  & 3 \\
            -1 & -2 & 2  & 0  & 0  & -1 & 0  & 0  & 0  & 1  & -1  & -6 & 2  & 3 \\
            1  & 0  & 1  & 0  & -1 & 1  & 2  & 1  & 2  & -1 & 0   & 2  & -4 & 0 \\
            -1 & 0  & -1 & 0  & -1 & 3  & 2  & -1 & 0  & 1  & 3   & 3  & 0  & -10\\
          \end{array}\right]
    \quad
    \left[\begin{array}{r}
            -4 \\
            -5 \\
            0 \\
            -2 \\
            4 \\
            3 \\
            1 \\
            3 \\
            -1 \\
            6 \\
            -2 \\
            4 \\
            0 \\
            2
          \end{array}\right]
  \end{align*}
  \caption{Matrix of the intersection product and the
    coordinates of the hyperplane section in $\pic(X)$.}
  \label{fig:candidate_intersection_product} 
\end{figure}

Applying standard methods to be discussed in \S\ref{sec:P1inK3}, we find from the Picard lattice of $X$ that there are $4$~lines, $102$~quartic curves and no twisted cubics inside $X$.

\subsection{Transcendental lattice and reduction to finite characteristic}
\label{sec:transcendental}

\paragraph{Definition and properties}

Let $X$ be a quartic surface. Beyond the Picard group of $X$, we can compute its \emph{transcendental lattice} and its \emph{endomorphism ring}.
The transcendental lattice of $X$ is defined as
\begin{equation}
T = \left\{ \omega \in H^2(X,\Qq) \st \forall \gamma\in\pic(X), \int_\gamma \omega = 0  \right\},
\end{equation}
which is a $\Qq$-linear space of dimension~$22-\rk \pic(X)$.
Furthermore, let $T_\Cc = T \otimes \Cc \subset H^2(X, \Cc)$ and observe $\omega_X \in T_\Cc$ by~\eqref{eq:1}.
The endomorphism ring~$E \subset \operatorname{End}_\Qq(H^2(X, \Qq))$ is defined as the subring of all linear maps~$e$ such that
$e(\omega_X) \in \Cc \langle \omega_X \rangle$, where~$e$ has been extended canonically to~$H^2(X, \Cc)$.
The map~$\phi \colon E\to \Cc$ defined by~$e(\omega_X) = \phi(e) \omega_X$ is an injective ring morphism and every element in $E$ is invertible, therefore~$E$ is a number field~\cite[Corollary~3.3.6]{huybrechts--k3}.
In fact, $E$ is either totally real or a CM-field~\cite{zarhin--k3}, see also~\cite{huybrechts--k3}.

Charles~\cite{Charles_2014} determined in terms of~$E$ the overestimation of reduction methods to compute the Picard number of K3 surfaces.
We give here a quick overview, see \cite{huybrechts--k3,tretkoff--book} for further results. Although we state these results for quartic surfaces over~$\Qq$ much of it holds for any K3 surface over a number field.

If the quartic~$X \subset \ppp$ is defined by a polynomial~$f$ with integer coefficients,
we may consider for all but finitely many prime~$p$ the smooth quartic surface~$X_p$ defined over~$\Ff_p$ by the reduction of~$f$ modulo~$p$. Let~$\rho$ and~$\rho_p$ denote the (geometric) Picard numbers of~$X$ and~$X_p$ respectively.
Let $\rho_{\red}$ be the minimum of the set $\left\{\rho_p \st p \text{ prime and } X_p \text{ smooth}\right\}$.
The starting point of reduction methods is the inequality~$\rho \leq \rho_\red$ and the relative ease with which the numbers $\rho_p$ are computed. A key issue is to determine whether~$\rho = \rho_\red$.

Although $\rho$ can be either even or odd, $\rho_p$ is always even. This issue was partially overcome by van~Luijk~\cite{vanLuijk_2007} who gave necessary conditions for $\rho = \rho_{\red}$. He used his argument to give the first example of a K3 surface defined over the rationals with Picard number~$1$ by exhibiting a surface~$X$ with~$\rho_\red = 2$ that does not satisfy his necessary condition. It was asked by Elsenhans and Jahnel~\cite{elsenhans_jahnel--kummer} whether $\rho=\rho_{\red}$ if $\rho$ is even and $\rho=\rho_{\red}-1$ if $\rho$ is odd. Charles~\cite{Charles_2014} settled the question in the negative.

\begin{theorem}[Charles]\label{thm:charles}
  The equality $\rho_{\red}=\rho$ holds unless $E$ is totally real and the dimension of $T$ over $E$ is odd, in which case $\rho_{\red}=\rho + \dim_{\Qq} E$.
\end{theorem}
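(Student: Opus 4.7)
The strategy is to read $\rho_p$ off the characteristic polynomial of Frobenius on \'etale cohomology, and to constrain that polynomial by exploiting the $E$-action on the transcendental lattice. By the Tate conjecture for K3 surfaces over finite fields (Charles, Madapusi Pera, Maulik), for a prime $p$ of good reduction one has $\rho_p = \rho + N_p$, where $N_p$ is the number of eigenvalues of geometric Frobenius $F_p$ acting on $T \otimes_\Qq \Qq_\ell$ that equal $p$, counted with multiplicity. The task therefore reduces to computing $\min_p N_p$.

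The next step identifies the Mumford--Tate group $\mathrm{MT}(T)$. The cup product gives a $\Qq$-valued symmetric bilinear form $\psi$ on $T$, and the action of $E$ satisfies $\psi(ex,y)=\psi(x,\sigma(e)y)$, where $\sigma$ is the identity if $E$ is totally real and complex conjugation if $E$ is CM. Hence $\psi$ descends to an $E_0$-valued form $q$, with $E_0$ equal to $E$ in the real case and equal to the maximal totally real subfield of $E$ in the CM case. The group $\mathrm{MT}(T)$ commutes with $E$ and preserves $\psi$, so it is contained in $G \eqdef \operatorname{Res}_{E_0/\Qq} H$, where $H = \operatorname{SO}(T,q)$ in the real case and $H = \operatorname{U}(T,q)$ (unitary for $E/E_0$) in the CM case. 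Zarhin's theorem on K3 Hodge structures shows that $\mathrm{MT}(T) = G$ up to isogeny.

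Over $\bar{\Qq}_\ell$, the group $G$ becomes a product of $[E_0:\Qq]$ copies of $H_{\bar{\Qq}_\ell}$, which is $\operatorname{GL}_d$ in the CM case and $\operatorname{SO}_d$ in the totally real case, with $d = \dim_E T$. A generic element of $\operatorname{GL}_d$, or of $\operatorname{SO}_d$ with $d$ even, has no eigenvalue equal to $1$; but when $d$ is odd, every element of $\operatorname{SO}_d(\bar{\Qq}_\ell)$ has $1$ as an eigenvalue (the nontrivial eigenvalues pair as $\{\lambda,\lambda^{-1}\}$, the eigenvalue $-1$ occurs with even multiplicity since the determinant is $+1$, and $d$ is odd), and generically this multiplicity is exactly $1$. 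By the Mumford--Tate conjecture for K3 surfaces (Tankeev, Andr\'e), the image of the Galois representation on $T \otimes \Qq_\ell$ is open in $\mathrm{MT}(T)(\Qq_\ell) = G(\Qq_\ell)$; the Chebotarev density theorem then guarantees that for infinitely many primes $p$, the normalized Frobenius $p^{-1}F_p$ is generic in $G(\bar{\Qq}_\ell)$ in this sense. Its number of eigenvalues equal to $1$---equivalently, $N_p$---thus attains $0$ in the first two cases and exactly $[E:\Qq] = \dim_\Qq E$ in the third, yielding the claimed formula.

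The principal obstacle is the identification $\mathrm{MT}(T) = G$ in the second step: one needs the Mumford--Tate group to be \emph{as large as possible} subject to the symmetries, not a proper subgroup that might artificially introduce or suppress $1$-eigenvalues. This is where the assumption that $T$ is the transcendental part of a K3 Hodge structure enters essentially, via Zarhin's theorem. The group-theoretic dichotomy exploited in the eigenvalue count is then elementary linear algebra, and the remaining deep inputs---the Tate and Mumford--Tate conjectures for K3 surfaces---enter as black boxes.
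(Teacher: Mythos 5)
The paper does not prove this statement at all: it is quoted verbatim from Charles \cite{Charles_2014}, so there is no internal proof to compare against. Your sketch is, in outline, the same argument as Charles' published proof: Tate conjecture for K3s over finite fields to convert $\rho_p-\rho$ into a count of Frobenius eigenvalues on $T\otimes\Qq_\ell(1)$, Zarhin's determination of the Mumford--Tate (Hodge) group as $\operatorname{Res}_{E_0/\Qq}$ of an orthogonal or unitary group, the Mumford--Tate conjecture (Tankeev, Andr\'e) to make the Galois image open in it, and the elementary observation that $\mathrm{SO}_d$ with $d$ odd forces an eigenvalue $1$ in each of the $[E_0:\Qq]$ factors while $\mathrm{SO}_{d}$ ($d$ even) and $\mathrm{GL}_d$ do not. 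Three points in your write-up are stated more casually than they can be carried out. First, the geometric Picard number of $X_{\bar\Ff_p}$ counts eigenvalues of Frobenius that are $p$ times a \emph{root of unity} (one must allow all finite extensions of $\Ff_p$), not only eigenvalues equal to $p$; since the eigenvalues have bounded degree over $\Qq$, only finitely many orders of roots of unity occur, and the argument goes through, but this must be said. Second, before invoking equivariance you must replace the base field by a finite extension over which the $E$-action on $T$ is defined and commutes with Galois (harmless, since $\rho_p$ is geometric), and only then does the forced lower bound $\rho_p\ge\rho+[E:\Qq]$ hold for all good $p$. Third, ``Chebotarev guarantees a generic normalized Frobenius'' needs the standard but nontrivial completion: the bad locus is a proper Zariski-closed, conjugation-invariant subset of the image group, hence closed with empty interior in the $\ell$-adic topology, and density of Frobenius elements (plus openness of the image) then yields infinitely many primes avoiding it. With these repairs your plan is a faithful reconstruction of Charles' argument rather than an alternative route.
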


\paragraph{Computation}
 From the numerical computation of periods, we obtain approximations of~$a_i \eqdef \int_{\gamma_i} \omega_X \in \Cc$ for some basis~$\gamma_1,\dotsc,\gamma_{22}$ of~$\H_2(X,\Zz)$.
The cohomology group~$H^2(X,\Cc)$ is endowed with the dual basis~$\gamma_1^*,\dotsc,\gamma_{22}^*$ so that~$\omega_X = \sum_i a_i \gamma_i^*$.
Once a basis~$u_1,\dotsc,u_\rho$ of the Picard group~$\pic(X)$ is computed, a basis~$v_1,\dotsc,v_{\rho'}$ of~$T \eqdef \pic(X)^\perp \subseteq \H^2(X,\Qq)$ is found readily.

For~$e \in \operatorname{End}_\Qq(H^2(X, \Qq))$ the condition $e \in E$ can be rewritten as:
\begin{equation}
\exists \lambda \in \Cc: e(\omega_X) = \lambda \omega_X \Leftrightarrow \langle \omega_X, e(\omega_X) \rangle \omega_X = \langle \omega_X,\omega_X \rangle e(\omega_X).
\end{equation}
Writing $A = (a_1,\dotsc,a_{22})^t$ for the coefficient vector of~$\omega_X$, we can compute the endomorphism ring~$E$ via the following formulation:
\begin{equation}
  E = \Qq \cdot \left\{ M \in \Zz^{\rho'\times\rho'} \st (\bar A^t M A) A = (\bar A^t A) MA \right\}.
\end{equation}
Just as with the computation of~$\pic(X)$, the problem of computing $E$ is now a problem of computing integer solutions to linear equations with approximate real coefficients. We approach it once again with lattice reduction algorithms, see~\S \ref{sec:numer-reconstr-picar}. Examples are provided in~\S\ref{sec:experiment}.

\section{Smooth rational curves in K3 surfaces} \label{sec:P1inK3}

The data  of the matrix of
the intersection product in some basis of the Picard group of a
smooth quartic surface $X \subset \ppp$ together with the coordinates of the
class of hyperplace section in the same basis (as in Figure~\ref{fig:candidate_intersection_product}) is enough to count all smooth rational
curves of a given degree lying on~$X$.
We describe an algorithm here that, at least in broad strokes, seems to be folklore.\footnote{We are indebted to Alex Degtyarev for sharing his understanding with us.}

In principle, smooth rational curves in a surface can be enumerated using purely symbolic methods and for lines this process is routine. However, it is a challenge to enumerate even the quadric curves in quartic surfaces, let alone higher degree curves in higher degree surfaces. The computation of the Picard group offers an indirect solution to this problem.

Fix a smooth quartic~$X\subset\ppp$ and for each positive integer $d$ let~$\cR_d$ be the set of all smooth rational curves of degree~$d$ lying in~$X$. In order to compute the cardinality of the set $\cR_d$ we will first observe that a smooth rational curve is completely determined by its linear equivalence class. Recall that we denote by~$h_X \in \pic(X)$ the class of a hyperplane section. For $d > 0$ we define the set~$M_d = \left\{ D \in \pic(X) \st D^2 = -2,\ D\cdot h_X = d\right\}$.

\begin{lemma}\label{prop:Nd-injection}
  A smooth rational curve in $X$ is isolated in its linear equivalence class. Moreover, the map~$\cR_d \to \pic(X)$ which maps a rational curve to its linear equivalence class injects $\cR_d$ into $M_d$. 
\end{lemma}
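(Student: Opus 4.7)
The plan hinges on the fact that $X$ is a K3 surface, so $K_X = 0$ and $\chi(\mathcal{O}_X) = 2$. I would split the statement into two parts: first establish that the class of a smooth rational curve lies in $M_d$; then show that the class-taking map is injective and that the curve is isolated, both of which I would deduce from a single cohomological computation.

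For the first part, let $C \in \cR_d$ be a smooth rational curve of degree $d$. The intersection with a hyperplane section is the degree, so $C \cdot h_X = d$. The adjunction formula on $X$ reads
\begin{equation*}
  2g(C) - 2 = C^2 + K_X \cdot C = C^2,
\end{equation*}
and since $g(C) = 0$ we obtain $C^2 = -2$. Hence $[C] \in M_d$.

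For isolation and injectivity I would compute $h^0(\mathcal{O}_X(C))$ via the short exact sequence
\begin{equation*}
  0 \to \mathcal{O}_X \to \mathcal{O}_X(C) \to \mathcal{O}_C(C) \to 0.
\end{equation*}
The restriction $\mathcal{O}_C(C)$ has degree $C^2 = -2$ on $C \simeq \p$, so $H^0(C, \mathcal{O}_C(C)) = 0$. Taking global sections in the sequence yields $H^0(X, \mathcal{O}_X(C)) = H^0(X, \mathcal{O}_X)$, which is one-dimensional. Therefore the complete linear system $|C|$ consists of the single divisor $C$.

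From this the two conclusions follow immediately. Isolation in the linear equivalence class is precisely the statement that $|C|$ is zero-dimensional. Injectivity of $\cR_d \to \pic(X)$ follows because any $C' \in \cR_d$ with $[C'] = [C]$ is an effective divisor in $|C|$, and so $C' = C$ as divisors, hence as curves. The main conceptual input is the K3 condition $K_X = 0$; no step should present a real obstacle, as the argument is a standard application of adjunction and Riemann--Roch on K3 surfaces once the geometric setup has been laid out.
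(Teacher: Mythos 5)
Your proof is correct, and its first half (degree gives $C\cdot h_X=d$, adjunction with $K_X=0$ gives $C^2=-2$, hence $[C]\in M_d$) is the same as the paper's. For isolation and injectivity you take a genuinely different route: you compute $h^0(X,\co_X(C))=1$ from the restriction sequence $0\to\co_X\to\co_X(C)\to\co_C(C)\to 0$, using that $\co_C(C)$ has degree $C^2=-2<0$ on $C\simeq\p$, and conclude that the complete linear system $|C|$ consists of the single divisor $C$. The paper instead argues purely intersection-theoretically: if $C'$ were a curve linearly equivalent to but distinct from $C$, then $[C]\cdot[C']$ would be non-negative, since it counts points of $C\cap C'$ with multiplicity, contradicting $[C]\cdot[C']=[C]^2=-2$. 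The paper's argument is shorter and stays within the lattice-theoretic data used throughout the section, with no sheaf cohomology; your argument buys a slightly stronger conclusion, namely that $C$ is the unique effective divisor in its class ($\dim|C|=0$), and it treats all members of $|C|$ uniformly, sidestepping the case of a divisor sharing a component with $C$, where the ``count the intersection points'' justification requires a word of care. Both rest on the same K3 input $K_X=0$, so either is a legitimate proof of the lemma.
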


\begin{proof}
  Let~$C \in \cR_d$ and~$D=[C] \in \pic(X)$. As $C$ is of degree $d$, it intersects a general hyperplane in $d$ points so that $C \cdot h_X = d$. Recall that the canonical class $K_X$ of the K3 surface $X$ is trivial so that adjunction formula reads $D^2 = D\cdot(K_X+D)=2g(\p)-2=-2$ \cite[\S II.11]{BarthHulekPetersEtAl_2004}\cite[Ex.~V.1.3]{Hartshorne_1977}. This proves that the image of $\cR_d$ lies in $M_d$.

  Now we show that $C$ is isolated in its linear system. Indeed, if $C'$ is a curve linearly equivalent to but different from $C$, then the intersection number $[C] \cdot [C']$ must be positive, as this number can be obtained by counting the points in $C \cap C'$ with multiplicity. This leads to a contradiction: $-2 = [C]^2 = [C]\cdot[C'] > 0$.
\end{proof}

Typically, the inclusion $\cR_d \toi M_d$ is strict. We now demonstrate that with knowledge of $M_{d'}$ for each $d' \le d$ one can compute the image of $\cR_d$ in $M_d$. For each $d > 0$ define inductively a subset $N_d \subset M_d$ as follows:
\begin{equation}
  N_d = \left\{ D \in M_d \st \forall d'< d, \forall D'\in N_{d'}, D' \cdot D \geq 0 \right\}.
\end{equation}
Note that when $d=1$ there are no constraints and we have $N_1 = M_1$.

\begin{proposition}\label{prop:image_of_Rd}
  For $d >0$, the image of the inclusion $\cR_d \toi M_d$ is a bijection onto $N_d$.
\end{proposition}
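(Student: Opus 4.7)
The plan is to prove that $N_d$ equals the image of $\cR_d$ by induction on $d$, assuming the result for all $d' < d$.

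For the forward inclusion, the argument reuses the observation underlying Lemma~\ref{prop:Nd-injection}: for $C \in \cR_d$ and any $C' \in \cR_{d'}$ with $d' < d$, the curves are distinct and irreducible, so $[C] \cdot [C'] \geq 0$. The inductive hypothesis says every element of $N_{d'}$ arises as such a class $[C']$, so $[C]$ satisfies the defining condition for $N_d$.

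For the reverse inclusion, take $D \in N_d$. Riemann--Roch on the K3 surface $X$ yields $\chi(D) = 2 + D^2/2 = 1$, so either $D$ or $-D$ is effective; ampleness of $h_X$ combined with $D \cdot h_X = d > 0$ eliminates the second possibility, so $D$ is effective. Write $D = \sum_i n_i [C_i]$ as a positive combination of distinct irreducible curves. If there is only one component and $n_1 = 1$, adjunction gives $g_a(C_1) = D^2/2 + 1 = 0$, so $C_1$ is smooth rational of degree $d$ and $D \in \cR_d$.

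In the remaining case, I claim that every component has $h_X$-degree $d_i = C_i \cdot h_X$ strictly less than $d$: a component with $d_i = d$ would force $D - [C_i]$ to be an effective class of $h_X$-degree zero, hence zero by ampleness of $h_X$, contradicting the assumption that $D$ has either more than one component or a component with multiplicity at least two. Next, I claim $[C_i] \cdot D \geq 0$ for every $i$. When $g_a(C_i) \geq 1$, adjunction gives $[C_i]^2 \geq 0$, and the expansion $[C_i] \cdot D = n_i [C_i]^2 + \sum_{j \neq i} n_j [C_i] \cdot [C_j]$ is non-negative since distinct irreducible curves meet non-negatively. When $C_i$ is smooth rational, the inductive hypothesis gives $[C_i] \in N_{d_i}$, so the defining condition of $N_d$ applied to $[C_i]$ directly yields $[C_i] \cdot D \geq 0$. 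Summing with multiplicities gives $D^2 = \sum_i n_i [C_i] \cdot D \geq 0$, contradicting $D^2 = -2$.

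The key conceptual point, and essentially the only real obstacle, is recognising that the defining condition of $N_d$ is precisely calibrated so that the smooth rational components of any hypothetical non-irreducible $D$ are forced to contribute non-negatively to $D^2$ via the inductive hypothesis, while the higher-genus components do so automatically by adjunction and positivity of intersections of distinct irreducible curves, making the contradiction $D^2 \geq 0$ unavoidable.
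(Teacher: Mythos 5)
Your proof is correct and follows essentially the same route as the paper's: Riemann--Roch to force $D$ effective, decomposition into irreducible components, adjunction to detect smooth rational components, and the inductive identification of $N_{d'}$ with classes of curves in $\cR_{d'}$ to invoke the defining condition of $N_d$. The only difference is organizational: the paper picks a component $C_i$ with $C_i\cdot D<0$, shows it is smooth rational of degree exactly $d$ and hence $D=[C_i]$, whereas you rule out any reducible or non-reduced $D$ by showing it would satisfy $D^2\ge 0$, contradicting $D^2=-2$; the underlying ideas coincide.
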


\begin{proof}
  For any two distinct irreducible curves $C$ and~$C'$ we have $C \cdot C' \ge 0$. Upon taking $C \in \cR_d$ and $C' \in \cR_{d'}$ for $d' < d$ we see that $\cR_d$ injects in to $N_d$. 
  
  Now take any~$D \in N_d$. From the Riemann--Roch theorem for surfaces \cite[V.1.6]{Hartshorne_1977} we get:
  \begin{equation*}
     \dim \H^0(X, \co_X(D)) + \dim \H^0(X, \co_X(-D)) \geq \frac12 D^2 + 2 = 1,
  \end{equation*}
  so that either $D$ or~$-D$ must be effective. Since~$D\cdot h_X > 0$, $-D$ can not be effective and therefore $D$ must be.

  Let us write~$D$ as a sum of classes of distinct irreducible curves~$\sum_i n_i C_i$ with $n_i >0$. Since $D^2 < 0$ there exists an index $i$ such that $C_{i} \cdot D <0$. Moreover, $C_{i} \cdot C_j \geq 0$ for every~$j\neq i$, so~$C_{i}^2 < 0$. By adjunction formula, we conclude that $C_{i}$ must be a smooth rational curve \cite[Ex.~IV.1.8]{Hartshorne_1977}. Furthermore, let $d'=C_{i} \cdot h_X$ and observe $d' \le d$. By definition of $N_d$ we must have $d'=d$ and therefore $D = C_{i}$. Therefore, $\cR_d$ surjects onto $N_d$.
\end{proof}

Proposition~\ref{prop:image_of_Rd} implies that in order to compute the cardinality of the set $\cR_d$ it suffices to compute the set $N_d$ (see Algorithm~\ref{alg:count_curves}). The latter can be easily computed from the sets $M_{d'}$ for $d' \le d$. We now reduce the computation of $M_d$ for each $d > 0$ to the enumeration of all vectors of a given norm in a lattice with a negative definite quadratic form.

Let~$\pic^0(X) = \left\{ D\in\pic(X) \st D\cdot h_X = 0 \right\}$.  The intersection product on $\pic^0(X)$ is negative definite~\cite[Proposition~1.2.4]{huybrechts--k3}. Recalling that $h_X^2=4$, we define a map 
$\pi \colon \pic(X) \to \pic^0(X)$ with $ \pi(D)= 4D - (D\cdot h_X) h_X$. 

The map $\pi$ maps $M_d$ bijectively on to the following set:
  \begin{equation}
    \overline{M}_d =  \left\{ E \in \pic^0(X) \st E^2 = -(32 + 4 d^2) \text{ and } E + d h_X \in 4 \pic(X) \right\}.
  \end{equation}
The inverse map $\overline{M}_d \to M_d$ is given by $E \mapsto \frac{1}{4} (E+dh_X)$.

In order to compute $\overline{M}_d$ we first find the finitely many elements $E\in\pic^0(X)$ of norm~$-(32 + 4d^2)$, for example using KFP algorithm \cite{Kannan_1983,FinckePohst_1983}. Then, among all such $E$, we select those where $\frac14 \left( E + dh_X \right)$ has integer coordinates to obtain $\overline{M}_d$. 
  In practice, it is sufficient and  more efficient to enumerate the elements of length~$-(32 + 4d^2)$ in the sublattice~$\pi(\pic(X)) = \pic^0(X) \cap (4 \pic(X) + \Zz h_X)$.

\begin{algorithm}[t] \centering
  \begin{description}[leftmargin=5em,style=nextline]
    \item[Input.]
           The Picard group (i.e. matrix of the interction product in some basis and the coordinates of the class of hyperplane section in the same basis) of a smooth
quartic surface $X \subset \ppp$;
an integer~$d > 0$.
  \item[Output.] The set $\left\{ [C] \in \pic(X) \st \text{$C\subset X$ is
        a smooth rational curve of degree~$d$} \right\}$.
  \end{description} \medskip
  \begin{algorithmic}
    \Function{RationalCurves}{$\pic(X)$, $d$}
    \State Compute a basis of~$\pic^0(X) = \left\{ D\st D\cdot h_X =
      0 \right\} \subset \pic(X) \simeq \Zz^\rho$
    \State Compute a basis of~$4 \pic(X) + \Zz h_X \subset \pic(X)$
    \State Compute a basis of~$\Lambda = \pic^0(X) \cap (4 \pic(X) + \Zz h_X)
    \subset \pic(X)$
    \State $S \gets \left\{ D \in \Lambda \st -D^2 = 32 + 4d^2 \right\}$
    \Comment{e.g.
    with KFP algorithm \cite{Kannan_1983,FinckePohst_1983}}
    \State $M_d \gets \left\{ \frac14 (D+d h_X) \st D \in S \right\} \cap \Zz^\rho$
    \State \textbf{return} $\left\{ D \in M_d \st \forall d' < d, \forall
      D'\in\operatorname{\textsc{RationalCurves}}(\pic(X), d'), D\cdot D' \geq 0 \right\}$
    \EndFunction
  \end{algorithmic}
  \caption{Finding rational curve classes on smooth quartic surfaces.}
  \label{alg:count_curves}
\end{algorithm}

\begin{example}
  Take $f_X = 14x^4 - 85x^3z - 2xz^3 + 83y^4 - 17y^3w - 96w^4$ and let $X=Z(f_X) \subset \ppp$. We find that $X$ has Picard number $18$ with the following representation of $(\pic(X),h_X)$:
\begin{align*}
  \tiny
    \left[
    \begin{array}{rrrrrrrrrrrrrrrrrr}
    -4 & 0  & -1 & 0  & -1 & 2  & 0  & 0  & 0  & -2 & 0  & -2 & 0  & 1  & -1 & -1 & 0  & -2 \\
     0 & -6 & -3 & -3 & -3 & 3  & 0  & -2 & -2 & -1 & 3  & -1 & -1 & 3  & -1 & 0  & 1  & 0 \\
    -1 & -3 & -4 & -2 & -2 & 2  & 0  & 0  & -1 & 0  & 2  & 0  & -2 & 3  & -2 & 0  & 1  & 0 \\
     0 & -3 & -2 & -4 & -2 & 2  & 1  & -1 & 0  & -1 & 1  & -1 & -1 & 2  & -1 & 0  & 1  & 0 \\
    -1 & -3 & -2 & -2 & -4 & 2  & 0  & 0  & -1 & -1 & 2  & 0  & 0  & 2  & -2 & 0  & 1  & 0 \\
     2 & 3  & 2  & 2  & 2  & -4 & 1  & 2  & 2  & 2  & -1 & 2  & 0  & -2 & 2  & 0  & -1 & 2 \\
     0 & 0  & 0  & 1  & 0  & 1  & -4 & -2 & -2 & 1  & 1  & 1  & 0  & 0  & -1 & 2  & 1  & 1 \\
     0 & -2 & 0  & -1 & 0  & 2  & -2 & -4 & -2 & 0  & 1  & -1 & 0  & 1  & 0  & 1  & 1  & 0 \\
     0 & -2 & -1 & 0  & -1 & 2  & -2 & -2 & -4 & 0  & 2  & 0  & 0  & 1  & -1 & 1  & 2  & 0 \\
    -2 & -1 & 0  & -1 & -1 & 2  & 1  & 0  & 0  & -4 & 0  & -2 & 1  & 0  & 0  & -2 & -1 & -2 \\
     0 & 3  & 2  & 1  & 2  & -1 & 1  & 1  & 2  & 0  & -4 & 0  & 2  & -2 & 1  & -1 & -2 & -2 \\
    -2 & -1 & 0  & -1 & 0  & 2  & 1  & -1 & 0  & -2 & 0  & -4 & 0  & 0  & 0  & 0  & 0  & -2 \\
     0 & -1 & -2 & -1 & 0  & 0  & 0  & 0  & 0  & 1  & 2  & 0  & -4 & 3  & -1 & 1  & 2  & 2 \\
     1 & 3  & 3  & 2  & 2  & -2 & 0  & 1  & 1  & 0  & -2 & 0  & 3  & -6 & 3  & 0  & -2 & 0 \\
    -1 & -1 & -2 & -1 & -2 & 2  & -1 & 0  & -1 & 0  & 1  & 0  & -1 & 3  & -4 & 1  & 2  & -1 \\
    -1 & 0  & 0  & 0  & 0  & 0  & 2  & 1  & 1  & -2 & -1 & 0  & 1  & 0  & 1  & -4 & -2 & -1 \\
     0 & 1  & 1  & 1  & 1  & -1 & 1  & 1  & 2  & -1 & -2 & 0  & 2  & -2 & 2  & -2 & -4 & -1 \\
    -2 & 0  & 0  & 0  & 0  & 2  & 1  & 0  & 0  & -2 & -2 & -2 & 2  & 0  & -1 & -1 & -1 & -4
    \end{array}\right]
  \tiny
    \left[
    \begin{array}{r}
  -2\\ -1\\ 1\\ 1\\ 3\\ 4\\ 4\\ 0\\ 0\\ 0\\ 0\\ 2\\ 2\\ 0\\ -2\\ 2\\ -1\\ 4 
    \end{array}\right].
\end{align*}
Applying Algorithm~\ref{alg:count_curves} we see that there are 16 lines, 288 quadrics and 1536 twisted cubics as determined by this lattice of $X$. The 16 lines, and their incidence correspondence, as we compute from this lattice are in agreement with what we can compute rigorously using symbolic methods.
\end{example}

\section{Numerical reconstruction of integer relations}
\label{sec:numer-reconstr-picar}

In view of~\eqref{eq:1} and~\eqref{eq:2}, recovering $\pic(X)$ boils down to
finding integer linear relations between the period vectors.
With the methods employed here, a finite but high enough precision will successfully recover~$\pic(X)$.
It seems difficult to decide if a given precision is ``high enough''.
Instead, we will study the process of finding linear relations between approximate vectors of real numbers and quantify the expected behavior of ``noise'', that is, of relations that are an artifact of the finite approximation. We will thus select relations whose behavior significantly differs from the expected behavior of noise.

The reconstruction of integer relations between real numbers is a well known
application of the Lenstra--Lenstra--Lovász lattice basis reduction
algorithm~\cite[p.~525]{LenstraLenstraLovasz_1982}, see also~\cite{BuchmannPohst_1989,ChenStehleVillard_2013}.
There are many other algorithms for the problem of computing integer
relations, in particular the HJLS~\cite{HastadJustLagariasEtAl_1989} and
PSLQ~\cite{FergusonBaileyArno_1999,Bailey_2008,FengChenWu_2018} families
and the first successful algorithm by Ferguson and
Forcade~\cite{FergusonForcade_1979}. 
A strong point in favor of the folklore LLL approach
is that efficient LLL implementations are available in most computer algebra systems.
To the best of our knowledge, existing work do not address the problem of
computing the full lattice of integer relations (not just one) between real
vectors (not just real numbers) given by approximations (not assuming exact data
and exact arithmetic).

In this section, we recall and analyze the LLL approach to solve the following problem:
\emph{Given a numerical approximation of a real matrix $P\in \Rr^{m \times p}$, with $p\leq m$,
recover a basis of the lattice $\Lambda = \left\{ x \in \Zz^{m} \st x P = 0  \right\}$.}

In our setting, the coefficients of~$P$ are the real and imaginary parts
of the periods~$\int_{\gamma_i} \omega_j$ of the surface~$X$ under
consideration.
For~$B>0$ and~$\varepsilon > 0$ let~$\Lambda_{B,\varepsilon}$
be the lattice
\begin{equation}
  \Lambda_{B,\varepsilon} = \left\langle x\in \Zz^m \st \|x\| \leq B \text{ and }
    \|x P \| < \varepsilon \right\rangle.
\end{equation}

A rigorous numerical computation of~$\Lambda$ faces two obstacles: the lack of an
\emph{a priori} bound on the norm of generators and the inability to recognize
zero among periods. In contrast, the lattice~$\Lambda_{B,\varepsilon}$ can be
computed exactly for given large~$B$ and small~$\varepsilon$.

If~$B$ is larger than the length of the largest vector in a generating family of~$\Lambda$, then for all~$\varepsilon>0$
small enough~$\Lambda_{B,\varepsilon} = \Lambda$.
No \emph{a priori} bounds are known about the values of~$B$
and~$\varepsilon$ that would ensure the desired equality $\Lambda_{B,\varepsilon} =
\Lambda$. If we choose~$B$ too small, we may miss integer relations in~$\Lambda$. If we
choose~$\varepsilon$ too big, we may compute relations that do not belong to~$\Lambda$.
Yet, meaningful results can be obtained by comparing with the \emph{expected} situation.

Assume that, for some large~$\beta > 0$ (typically $10^{300}$), we are given the exact value of the
$m\times p$  integer matrix $P_\beta$ obtained by entry wise rounding to the nearest integer the
coefficients of $\beta P$, that is
\begin{equation}
P_\beta = \beta P + E, \quad\text{with } E \in [-\tfrac12,\tfrac12]^{p\times m}.
\end{equation}
Then, we build the $m\times (p+m)$ integer matrix $M = \left[ \begin{array}{c|c} P_\beta & I_m \end{array} \right]$ and compute an LLL-reduced basis $b_1,\dotsc,b_m$ of the lattice spanned by the rows of~$M$.

We complement the folklore LLL approach with the following heuristic.
If~$\beta$ is large enough, Proposition~\ref{prop:heuristic} suggests that for $\rho=\rk \Lambda$ the norm
$\|b_\rho\|$ is small but the norm~$\|b_{\rho+1}\|$ is large and comparable to $\beta^{\frac p{m-\rho}}$.
In this case, $\Lambda = \langle \pr(b_1),\dotsc,\pr(b_\rho) \rangle$, where $\pr \colon \Zz^{p+m}\to \Zz^m$ is the projection on to the last $m$ coordinates.

\begin{algorithm}[t] \centering
  \begin{description}[leftmargin=8em,style=nextline]
  \item[Input.] $Q \in \Zz^{p\times m}$ and $\beta > 0$.
    \item[Precondition.] $Q = \beta P + E$ for some~$P \in \Rr^{p\times m}$ and~$E\in [-\tfrac12,\tfrac12]^{p\times m}$.
    \item[Output.] Fail or return~$u_1,\dotsc,u_\rho \subset \Zz^m$ and $B, \epsilon > 0$.
    \item[Postcondition.] Either $\langle u_1,\dotsc,u_\rho \rangle = \left\{ x \in \Zz^{m} \st x P = 0  \right\}$;\\
      or~$ \left\{ x \in \Zz^{m} \st x P = 0  \right\}$ is not generated by vectors of norm at most~$B$;\\
      or~$\exists x\in \Zz^m\colon \|x\| \leq \|u_\rho\|$, $\| xP\| \leq \epsilon$ and~$xP \neq 0$.
  \end{description} \medskip
  \begin{algorithmic}
    \Function{IntegerRelationLattice}{$Q$, $\beta$}
    \State Compute an LLL-reduced basis~$b_1,\dotsc,b_m$ of the lattice spanned by the rows~$\left[ \begin{array}{c|c} Q & I_m \end{array} \right]$.
    \State Find $\rho$ such that~$\|b_\rho\| \leq 2^{-m} \|b_{\rho+1}\|$ and~$\beta^{\frac p{m-\rho}}
    \approx \|b_{\rho+1}\|$. Fail if there is none.
    \State $\Lambda \gets \langle \pr(b_1),\dotsc,\pr(b_\rho) \rangle$, where $\pr \colon \Zz^{p+m}\to \Zz^m$ takes the last $m$ coordinates.
    \State $B \gets \frac1m 2^{-\frac{m+1}{2}} \| b_{\rho+1} \|$
    \State $\epsilon \gets m \beta^{-1} \|b_\rho\|$
    \State \textbf{Return} $(\Lambda, B, \epsilon)$
    \EndFunction
  \end{algorithmic}
  \caption{Computation of the lattice of integer relations between approximate real vectors with a heuristic check.}
  \label{algo:integer-relations}
\end{algorithm}

\subsection{Quantitative results}
\label{sec:from-an-appr}

For~$B \geq 1$, let
\begin{equation}
\varepsilon(B) = \min \left\{ \|uP\| \st u\in\Zz^m, \norm{u}
  \le B \text{ and } u P \neq 0\right\}.
\end{equation}
Equivalently, $\varepsilon(B)$ is the largest real number such
that
$\Lambda_{B,\varepsilon(B)} \subseteq \Lambda$.
Since $\varepsilon(B)$ is
non-increasing as a function of~$B$, the quotient $B/\varepsilon(B)$ is strictly
increasing as function of~$B$. In particular, for~$s \geq 0$ we may define a
non-decreasing function $\phi$ with
\begin{equation}
\phi(s)=\max\{B \geq 0 \mid mB/\varepsilon(B) \leq s \}.
\end{equation}
The growth of this function governs the ability to numerically reconstruct~$\Lambda$.

As above, assume that, for some~$\beta > 0$, we are given the exact value of the
integer $m\times p$ matrix $P_\beta$ obtained by entry wise rounding to the
nearest integer the coefficients of $\beta P$. Having coefficients
in~$[-\frac12,\frac12]$, the error matrix $E = P_\beta - \beta P$ satisfies $\| E
\|_{\mathrm{op}} \leq \frac12 \sqrt{pm} \leq m-1$, where
$\norm{\cdot}_{\mathrm{op}}$ denotes the operator norm, and where we
used~$\frac12 \sqrt{pm} \leq \frac12 m \leq m-1$, as~$m\geq 2$.

Let $R$ be the lattice generated by the rows of the integer $m\times (p+m)$ matrix $M = \left[ \begin{array}{c|c}
    P_\beta & I_m \end{array} \right]$ and let
$b_1,\dotsc,b_m$ be an LLL-reduced basis of~$R$.
We denote $B_0 = 0$ and $B_i = \|b_i\|$, for~$1\leq i\leq
m$.
In particular $B_0 \leq B_1 \leq \dotsb \leq B_{m}$.                                                
Gaps in this sequence typically separate the elements of~$R$ that come from genuine integer
relations from spurious relations coming from the inaccuracy of the approximations.
  
\begin{proposition}\label{prop:gap}Let~$\kappa =  m^{-1} 2^{-\frac{m+1}{2}}$.
  For any~$i \in \left\{ 0,\dotsc,m-1 \right\}$ such that~$B_i \leq\kappa B_{i+1}$, at least one
  of the following propositions holds:
  \begin{enumerate}[(i)]
  \item $\{\pr(b_1),\dots,\pr(b_i) \}$ is a basis of~$\Lambda$;
  \item $\Lambda$ is not generated by elements of norm~$\leq \kappa B_{i+1}$;
  \item $\phi(\beta) \leq B_i$.
  \end{enumerate}
\end{proposition}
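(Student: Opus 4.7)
The plan is to derive a trichotomy: assuming (ii) and (iii) both fail, I will show that (i) holds. The key structural observations are: (a) the projection $\pr\colon R \to \Zz^m$ onto the last $m$ coordinates is a $\Zz$-linear isomorphism, with inverse $u \mapsto v_u := (uP_\beta, u)$; (b) the sublattice $R_0 := \pr^{-1}(\Lambda)$ is primitive in $R$, since $\Lambda$ is primitive in $\Zz^m$. For any $u \in \Zz^m$, writing $uP_\beta = \beta uP + uE$ with $\|uE\| \le \|E\|_{\mathrm{op}}\|u\| \le (m-1)\|u\|$ yields size estimates in both directions: if $u \in \Lambda$, then $\|v_u\|^2 \le ((m-1)^2 + 1)\|u\|^2 \le m^2\|u\|^2$; if $u \notin \Lambda$, then $\|uP_\beta\| \ge \beta\|uP\| - (m-1)\|u\|$.

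For the upper bound, I would exploit failure of (ii) to pick generators $u_1, \dots, u_\rho$ of $\Lambda$ with $\|u_j\| \le \kappa B_{i+1}$. Then $\|v_{u_j}\| \le m\kappa B_{i+1} = 2^{-(m+1)/2}B_{i+1}$. If $\rho \ge i+1$, the $v_{u_j}$'s contain $i+1$ linearly independent vectors in $R$, so $\lambda_{i+1}(R) \le 2^{-(m+1)/2}B_{i+1}$; the standard LLL estimate $\|b_{i+1}\| \le 2^{(m-1)/2}\lambda_{i+1}(R)$ forces $B_{i+1} \le B_{i+1}/2$, a contradiction. Hence $\rho \le i$.

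For the lower bound, failure of (iii) gives $B_i < \phi(\beta)$, hence $\varepsilon(B_i) \ge mB_i/\beta$. I claim that every $v_u \in R$ with $u \notin \Lambda$ satisfies $\|v_u\| > B_i$. Otherwise $\|u\| \le \|v_u\| \le B_i$, hence $\|uP\| \ge \varepsilon(\|u\|) \ge \varepsilon(B_i) \ge mB_i/\beta$, and then
\[
\|uP_\beta\| \ge \beta\|uP\| - (m-1)\|u\| \ge mB_i - (m-1)\|u\| \ge B_i,
\]
so $\|v_u\|^2 = \|uP_\beta\|^2 + \|u\|^2 \ge B_i^2 + \|u\|^2 > B_i^2$, since $u \ne 0$. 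Using the monotonicity $B_j \le B_i$ for $j \le i$, every $b_j$ with $j \le i$ therefore lies in $R_0$.

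To conclude, $\langle b_1, \dots, b_i\rangle_\Zz$ is primitive in $R$ (because $b_1, \dots, b_m$ is a $\Zz$-basis of $R$) and is contained in the primitive sublattice $R_0$ of rank $\rho$. As $\langle b_1, \dots, b_i\rangle$ has rank $i$, one deduces $\rho \ge i$; combined with Step~1 this forces $\rho = i$ and $\langle b_1, \dots, b_i\rangle = R_0$. Applying $\pr$ yields (i). The delicate point is tuning the constants: $\kappa = m^{-1}2^{-(m+1)/2}$ is engineered so that the LLL chain in Step~1 produces a \emph{strict} contradiction, while the threshold $\varepsilon(B_i) \ge mB_i/\beta$ encoded by $\phi$ is exactly what makes the rounding error $uE$ negligible next to the genuine term $\beta uP$, so that a strict inequality $\|v_u\| > B_i$ can be extracted in Step~2.
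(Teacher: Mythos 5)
Your argument is correct and is essentially the paper's own proof in different packaging: the same lift $u \mapsto [\,uP_\beta \mid u\,]$, the same error estimate $\|uE\| \le (m-1)\|u\|$, the same LLL successive-minima property, and the same primitivity of $\langle b_1,\dots,b_i\rangle$ are exactly the ingredients of Lemma~\ref{lem:project_reduction}, from which the paper deduces Proposition~\ref{prop:gap}. The only difference is organizational: the paper identifies the truncated relation lattices $\Lambda_{B,\,mB\beta^{-1}}$ with $\langle \pr(b_1),\dots,\pr(b_i)\rangle$ for all $B\in[B_i,\kappa B_{i+1}]$ and then compares them with $\Lambda$ under the negations of (ii) and (iii), whereas you split the same estimates into a rank bound $\rho\le i$ and a membership claim $\pr(b_j)\in\Lambda$ for $j\le i$, glued at the end by the primitivity count.
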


\begin{proof}
  By Lemma~\ref{lem:project_reduction} below, we have
  \[ \Lambda_{B_i,m\beta^{-1} B_i} = \Lambda_{\kappa B_{i+1},m\beta^{-1} \kappa B_{i+1}} = \langle \pr(b_1),\dots,\pr(b_i) \rangle. \]
  If $\Lambda$ is generated by elements of norm~$\leq \kappa B_{i+1}$
  then $\Lambda \subseteq
  \Lambda_{\kappa B_{i+1}, m\beta^{-1}\kappa B_{i+1}}$, and therefore~$\Lambda \subseteq
  \Lambda_{B_i,m\beta^{-1} B_i}$.
  If moreover~$\phi(\beta) > B_i$, then  $m \beta^{-1} B_i \leq
  \varepsilon(B_i)$, by definition of~$\phi$,
  and this implies that $\Lambda_{B_i,m\beta^{-1} B_i} \subseteq \Lambda$.
\end{proof}

\begin{lemma}\label{lem:project_reduction}
  For any~$i \in \left\{ 0,\dotsc,m-1 \right\}$ and any~$B \in [B_i, \kappa B_{i+1}]$
  \[ \Lambda_{B, m B \beta^{-1}}= \langle \pr(b_1),\dots,\pr(b_i) \rangle. \]
\end{lemma}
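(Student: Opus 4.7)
My plan is to prove the two inclusions separately, using the bijection $\pr\colon R \to \Zz^m$ induced by the parametrization $x \mapsto xM = (xP_\beta, x)$ of $R$. This lets me pull the problem back to the LLL-reduced basis $b_1,\dots,b_m$ of $R$, where Gram--Schmidt tools apply directly.

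For the inclusion $\langle\pr(b_1),\dots,\pr(b_i)\rangle \subseteq \Lambda_{B,mB\beta^{-1}}$, I would check the two defining conditions for each generator. Writing $b_j = (x_j P_\beta, x_j)$ with $x_j = \pr(b_j)$, the identity $\|b_j\|^2 = \|x_j P_\beta\|^2 + \|x_j\|^2$ gives $\|x_j\|, \|x_j P_\beta\| \le B_j \le B_i \le B$. Using $P_\beta = \beta P + E$ with $\|E\|_{\mathrm{op}} \le m-1$ (a consequence of the entrywise bound on $E$ and $p \le m$), I get $\|x_j P\| = \beta^{-1}\|x_j P_\beta - x_j E\| \le \beta^{-1}(B_j + (m-1) B_j) = mB_j/\beta \le mB/\beta$, so each $\pr(b_j)$ satisfies the defining conditions on the right.

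The reverse inclusion is the core of the argument. Take a generator $x$ of $\Lambda_{B,mB\beta^{-1}}$ and lift it to $v = xM \in R$. The same error analysis gives $\|xP_\beta\| < (2m-1)B$ and hence $\|v\| < 2mB$, which by hypothesis is at most $2m\kappa B_{i+1} = 2^{(1-m)/2} B_{i+1}$. Suppose for contradiction that $v \notin \langle b_1,\dots,b_i\rangle$, and let $K > i$ be the largest index with $a_K \ne 0$ in the expansion $v = \sum_k a_k b_k$. Since the coefficient of the Gram--Schmidt vector $b_K^*$ in $v$ is the nonzero integer $a_K$, we have $\|v\| \ge \|b_K^*\|$. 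Iterating the Lov\'asz inequality $\|b_{k+1}^*\|^2 \ge \tfrac12 \|b_k^*\|^2$ from $k = i+1$ up to $K$ and combining with the standard LLL bound $\|b_{i+1}\|^2 \le 2^i \|b_{i+1}^*\|^2$ gives $\|b_K^*\|^2 \ge 2^{-(K-1)} B_{i+1}^2 \ge 2^{1-m} B_{i+1}^2$, i.e.\ $\|b_K^*\| \ge 2^{(1-m)/2} B_{i+1}$, contradicting the upper bound on $\|v\|$.

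The main obstacle is not conceptual but the precise matching of constants: $\kappa = m^{-1} 2^{-(m+1)/2}$ is calibrated so that the factor $2m$ from the rounding error on $P_\beta$ exactly absorbs the worst-case LLL factor $2^{(m-1)/2}$, and any looseness in the error analysis (for instance a weaker bound on $\|E\|_{\mathrm{op}}$) would destroy the contradiction. The strict inequality $\|xP\| < mB\beta^{-1}$ built into the definition of $\Lambda_{B,\varepsilon}$ is precisely what turns these matching bounds into the strict contradiction $\|v\| < 2^{(1-m)/2} B_{i+1} \le \|v\|$ needed to close the argument.
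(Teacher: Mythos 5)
Your proposal is correct, and the first inclusion coincides with the paper's argument (same lift $x\mapsto [xP_\beta\,|\,x]$, same bound $\|E\|_{\mathrm{op}}\le m-1$, same constants). Where you genuinely diverge is in the reverse inclusion. The paper bounds $\|r\|< 2^{-(m-1)/2}B_{i+1}$ exactly as you do, but then cites a black-box property of LLL-reduced bases (no $i+1$ linearly independent vectors of $R$ of norm below $2^{-(m-1)/2}\|b_{i+1}\|$) to conclude only that $r$ lies in the \emph{rational} span $\Qq\langle b_1,\dots,b_i\rangle$, and must then invoke primitivity of $R$ (any subset of a lattice basis generates a primitive sublattice) to upgrade this to $r\in\langle b_1,\dots,b_i\rangle_\Zz$. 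You instead unfold the Gram--Schmidt machinery: since $v\in R$ and the $b_k$ form a $\Zz$-basis, the top nonzero coefficient $a_K$ with $K>i$ is a nonzero integer, so $\|v\|\ge\|b_K^*\|$, and iterating the Lov\'asz condition together with $\|b_{i+1}\|^2\le 2^i\|b_{i+1}^*\|^2$ gives $\|b_K^*\|\ge 2^{(1-m)/2}B_{i+1}$, a contradiction. This buys you the integer-span conclusion in one stroke, with no primitivity argument, and makes transparent how $\kappa=m^{-1}2^{-(m+1)/2}$ is calibrated against the worst-case LLL factor; the paper's route is shorter on the LLL side but needs the extra structural observation about $R$. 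One small point: in the first inclusion you should note the strict inequality $\|x_jP_\beta\|<\|b_j\|\le B$ (valid because $\pr(b_j)\neq 0$, so $\|x_j\|\ge 1$), since membership in $\Lambda_{B,\varepsilon}$ requires $\|x_jP\|<mB\beta^{-1}$ strictly and your chain as written only yields $\le$.
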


\begin{proof}
  Let~$\Lambda_i \subset \Zz^m$ be the lattice generated
  by~$\pr(b_1),\dotsc,\pr(b_i)$ and let $R_i\subset R$ be the lattice generated
  by $\langle b_1,\dotsc,b_i \rangle$. Let~$R|_\tau$ denote the sublattice of
  $R$ generated by vectors of length at most~$\tau$.

  We first show $\Lambda_i \subseteq \Lambda_{B, m B \beta^{-1}}$.
  Let~$x = \pr(b_j)$, with~$j \leq i$.
  We have $\norm{x} \le \norm{b_j} \leq \norm{b_i} \le B$.
  Moreover $b_j = \left[\begin{array}{c|c} xP_\beta & x \end{array} \right]$,
  so $\norm{xP_\beta} < \norm{b_j} \leq B$.
  Since $xP = \beta^{-1}(xP_\beta- x E)$, we obtain
  \begin{equation}
    \norm{xP} \le \beta^{-1} \left( \norm{xP_\beta} + (m-1) \norm{x} \right) < m B \beta^{-1}.
  \end{equation}

  Conversely, let~$x \in \Zz^m$ such that~$\|x\| \leq B$ and~$\|x P\| <
  mB\beta^{-1}$. Let~$r = \left[\begin{array}{c|c} xP_\beta & x \end{array}
  \right]$.
  We check that
  \begin{align}
    \norm{r} &\leq \norm{x P_\beta} + \norm{x} \leq \beta \norm{xP} +\norm{xE}  + \norm{x} \\
             &\leq 2mB \nonumber 
             < 2^{-(m-1)/2} B_{i+1}.\nonumber 
  \end{align}
  The properties of
  an LLL-reduced basis \cite[Thm.~9]{Nguyen_2009} imply that no family of $i+1$
  vectors in $R$ with norms less than $2^{-(m-1)/2} B_{i+1}$ is
  independent.
  Since~$b_1,\dotsc,b_i$ are independent and of norm~$\leq B$,
  it follows that~$r \in \Qq R_i$.
   Moreover~$R$ is a primitive lattice (that is~$\Qq R \cap \Zz^{p+m} = R$)
  therefore any subset of the basis~$b_1,\dotsc,b_m$ of~$R$ generates a
  primitive lattice, so $r\in R_i$.
  And therefore~$x = \pr(r) \in \Lambda_i$.
\end{proof}

The size of the gap between~$B_{\rk \Lambda}$ and~$B_{\rk \Lambda + 1}$ can be
described more precisely in terms of~$\phi(\beta)$.

\begin{proposition}\label{prop:heuristic}
  Let~$\rho = \rk\Lambda$
  and let~$C$ be the smallest real number such that~$\Lambda$
  is generated by elements of norm at most~$C$. For any~$\beta > 0$:
  \begin{enumerate}[(i)]
    \item $B_\rho \leq \tfrac12\kappa^{-1} C$;
    \item $\phi(\beta) \leq B_{\rho+1}$;
  \end{enumerate}
  Moreover, if~$C \leq 2\phi(\beta)$, then
  \begin{enumerate}[(i),resume]
  \item $\kappa B_{\rho+1} \leq \phi(\beta)$;
  \item $\pr(b_1),\dotsc,\pr(b_\rho)$ is a basis of $\Lambda$.
  \end{enumerate}
\end{proposition}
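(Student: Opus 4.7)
The plan is to prove the four items in order, using the identification $u \leftrightarrow (uP_\beta, u)$ to move between $\Zz^m$ and the lattice $R$, together with two elementary estimates. From $uP_\beta = \beta uP + uE$ and $\|E\|_{\mathrm{op}} \le m-1$ I get $\|uP\| \le \beta^{-1}(\|uP_\beta\| + (m-1)\|u\|)$ and dually a lift estimate $\|(uP_\beta,u)\| \le \beta \|uP\| + m\|u\|$; the arithmetic identity $2m\kappa = 2^{-(m-1)/2}$ will be used repeatedly. For (i), I fix a generating set $x_1,\dots,x_\rho$ of $\Lambda$ with $\|x_i\| \le C$; since $x_i P = 0$, the lifts $r_i = (x_i P_\beta, x_i)$ satisfy $\|r_i\| \le m\|x_i\| \le mC$, and the standard LLL successive-minima bound $\|b_\rho\| \le 2^{(m-1)/2}\max_i\|r_i\|$ yields $B_\rho \le 2^{(m-1)/2} m C = \tfrac12\kappa^{-1} C$.

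For (ii), I argue by contradiction: if $\phi(\beta) > B_{\rho+1}$, then for each $j \le \rho+1$ the vector $u_j := \pr(b_j)$ has $\|u_j\| \le B_{\rho+1} < \phi(\beta)$ and $\|u_j P\| \le m B_{\rho+1}/\beta$. But $B_{\rho+1} < \phi(\beta)$ forces $\epsilon(B_{\rho+1}) \ge m B_{\rho+1}/\beta$ by the very definition of $\phi$, so the minimality of $\epsilon$ rules out $u_j P \ne 0$. Hence every $u_j$ lies in $\Lambda$, and since $\pr$ is injective on $R$ this produces $\rho+1$ independent elements of $\Lambda$, contradicting $\rk \Lambda = \rho$.

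For (iii), I assume $C \le 2\phi(\beta)$ and suppose $\kappa B_{\rho+1} > \phi(\beta)$ in order to derive a contradiction. Pick $B$ with $\phi(\beta) < B < \kappa B_{\rho+1}$. Because $B > \phi(\beta)$, the definition of $\phi$ yields $u \in \Zz^m$ with $\|u\| \le B$, $uP \ne 0$, and $\|uP\| < mB/\beta$; the same computation as in the proof of Lemma~\ref{lem:project_reduction} gives $r = (uP_\beta,u) \in R$ with $\|r\| < 2mB < 2m\kappa B_{\rho+1} = 2^{-(m-1)/2}B_{\rho+1}$. Simultaneously, the lifts $r_i$ from (i) satisfy $\|r_i\| \le mC \le 2m\phi(\beta) < 2m\kappa B_{\rho+1} = 2^{-(m-1)/2}B_{\rho+1}$, where the middle strict inequality uses the contradiction assumption. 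The family $(r_1,\dots,r_\rho,r)$ is linearly independent in $R$ because $\pr(r_i) = x_i$ span $\Lambda \otimes \Qq$ whereas $\pr(r) = u \notin \Lambda \otimes \Qq$. Having $\rho+1$ independent vectors of norm strictly less than $2^{-(m-1)/2}B_{\rho+1}$ contradicts the LLL property of the basis $b_1,\dots,b_m$ cited in the proof of Lemma~\ref{lem:project_reduction}.

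For (iv), I apply Lemma~\ref{lem:project_reduction} at $i = \rho$ and $B = \kappa B_{\rho+1}$: it asserts $\langle \pr(b_1),\dots,\pr(b_\rho)\rangle = \Lambda_{B, mB/\beta}$, and since (iii) gives $B \le \phi(\beta)$, each generating element $u$ of $\Lambda_{B, mB/\beta}$ must have $uP = 0$ (otherwise $\|uP\| \ge \epsilon(B) \ge mB/\beta$ would contradict the strict inequality in the definition of $\Lambda_{B,\,\cdot}$). Hence $\langle \pr(b_1),\dots,\pr(b_\rho)\rangle \subseteq \Lambda$. Writing $R = \{(vP_\beta, v) : v \in \Zz^m\}$ makes $\pr$ a $\Zz$-linear bijection $R \to \Zz^m$, so it sends the primitive sublattice $R_\rho = \langle b_1,\dots,b_\rho\rangle \subset R$ to a primitive rank-$\rho$ sublattice of $\Zz^m$; any such sublattice contained in the rank-$\rho$ primitive lattice $\Lambda$ must equal $\Lambda$, and the basis claim follows. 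The most delicate point is verifying that Lemma~\ref{lem:project_reduction} genuinely applies at $B = \kappa B_{\rho+1}$, i.e.\ that $B_\rho \le \kappa B_{\rho+1}$; this gap condition is the main obstacle and has to be squeezed out of the quantitative bounds established in (i)--(iii).
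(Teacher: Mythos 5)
Your treatment of (i)--(iii) is correct and essentially the paper's own argument: the same lift $r(x)=\left[\begin{array}{c|c} xP_\beta & x\end{array}\right]$, the same bound $\norm{xE}\le(m-1)\norm{x}$, and the same LLL fact that no $\rho+1$ independent vectors of $R$ have norm below $2^{-(m-1)/2}B_{\rho+1}$. Running (ii) and (iii) by contradiction rather than directly is only a cosmetic difference; if anything, your choice of a $B$ strictly between $\phi(\beta)$ and $\kappa B_{\rho+1}$ in (iii) avoids the paper's reliance on the exact identity $\beta\varepsilon(\phi(\beta))=m\phi(\beta)$, which is slightly delicate because $\varepsilon$ is a step function.

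The genuine gap is exactly where you flag it, in (iv), and it is not a removable technicality of your write-up. To invoke Lemma~\ref{lem:project_reduction} at $i=\rho$ with $B=\kappa B_{\rho+1}$ you need $B_\rho\le\kappa B_{\rho+1}$, and this cannot be squeezed out of (i)--(iii): under $C\le 2\phi(\beta)$, item (i) only gives $B_\rho\le\tfrac12\kappa^{-1}C\le\kappa^{-1}\phi(\beta)$, while (ii) only gives $\kappa B_{\rho+1}\ge\kappa\phi(\beta)$, and since $\kappa<1$ these inequalities point the wrong way, so the interval $[B_\rho,\kappa B_{\rho+1}]$ may a priori be empty ((iii) is an upper bound on $\kappa B_{\rho+1}$ and does not help either). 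Nor can you obtain the needed membership directly: if some $\pr(b_j)\notin\Lambda$ with $j\le\rho$, the estimate you use in (ii) yields only $\phi(\beta)\le B_\rho\le\kappa^{-1}\phi(\beta)$, which is not a contradiction; ruling out a ``spurious'' $b_j$ of norm between $\phi(\beta)$ and $\kappa^{-1}\phi(\beta)$ whose projection lies outside $\Lambda$ is precisely what the hypothesis $C\le2\phi(\beta)$ does not obviously give through these bounds. Your reduction of the generation step to the primitivity of $\pr(\langle b_1,\dotsc,b_\rho\rangle)$ is fine, so membership plus the gap condition is all that is missing---but it is missing. For what it is worth, the paper's printed proof of Proposition~\ref{prop:heuristic} stops after establishing (iii) and supplies no argument for (iv) either, so your difficulty reflects a real lacuna rather than an overlooked step in the source; as submitted, however, the proposal does not prove item (iv).
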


\begin{proof}
  For $x\in \Zz^m$ let $r(x) = \left[\begin{array}{c|c} xP_\beta & x \end{array}
  \right] \in R$.
  If~$x\in\Lambda$, that is~$xP = 0$,
  \begin{equation}
    \|r(x)\| \leq \|xP_\beta\| + \|x\| \leq \beta\|xP\| + \|xE\| + \|x\| \leq m \|x\|,
  \end{equation}
  using~$P_\beta = \beta P + E$.
  In particular,
  $R$ contains $\rho$ independent elements of norm at most~$m C$.
  This implies that~$B_\rho \leq m 2^{\frac{m-1}{2}} C =  \tfrac12\kappa^{-1} C$; this is~(i).

  For (ii), since~$\Lambda$ has rank~$\rho$, at least one of the
  $\pr(b_1),\dotsc,\pr(b_{\rho+1})$ is not in~$\Lambda$, say~$\pr(b_i)$, denoted~$x$.
  Since~$x = \pr(b_i) \not\in \Lambda$, $xP \neq 0$
  and~$\|x P\| \geq \varepsilon(B_{\rho+1})$.
  Moreover~$B_{\rho+1} \geq \|b_i\| \geq  \|xP_\beta\|$, because~$b_i = r(x)$.
  It follows
  \begin{equation}
    B_{\rho+1} \geq \beta\|xP\| - \|xE\| \geq \beta \varepsilon(B_{\rho+1}) - (m-1) B_{\rho+1}, 
  \end{equation}
  which implies $\phi(\beta) \leq B_{\rho+1}$.

  To check~(iii), let~$x\in\Zz^m$ such that~$\|x\|
  \leq \phi(\beta)$ and~$\|xP\| = \varepsilon(\phi(\beta))$.
  By construction, $x\not\in \Lambda$. The element~$r(x) \in R$ satisfies
  \begin{equation}
    \|r(x)\| \leq \beta \|x P\| + \|xE\| + \|x\| \leq \beta \varepsilon(\phi(\beta)) + m \phi(\beta).
  \end{equation}
  By definition of~$\phi$,  $\beta \varepsilon(\phi(\beta)) = m \phi(\beta)$
  and therefore~$\|r(x)\| \leq 2m\phi(\beta)$.
  As shown above, $R$ contains $\rho$ independent elements of norm~$\leq m C$
  that project to elements of~$\Lambda$. The vector $r(x) \in R$ does not
  project on~$\Lambda$, so $R$ contains~$\rho+1$ independent elements of norm
  $\leq m\max(C, 2\phi(\beta)) = 2\phi(\beta)$.
  It follows that~$\kappa B_{\rho+1} \leq \phi(\beta)$.
\end{proof}

Minkowski's Theorem on linear forms
shows that if~$\varepsilon^p \beta^{m-\rk \Lambda - p} \geq \det(P^T P)$,
there is an~$x \in \Lambda^\perp \cap \Zz^m$ such that~$\|xP\| \leq p \varepsilon$.
Therefore
\begin{equation}
  \varepsilon(\beta) = O\left(\beta^{1-\frac {m-\rk \Lambda}p}\right)
\quad\text{ and }\quad
  \phi(s) = O\left(s^{\frac{p}{m-\rk \Lambda}}\right).
\end{equation}

We define the \emph{irrationality measure} of~$P$, denoted~$\mu(P)$ as the infimum of all~$\mu > 0$ such that~$\varepsilon(\beta) = O(\beta^{1-\mu})$ as~$\beta\to\infty$.
As for the usual irrationality of real numbers, we can show with Borel--Cantelli Lemma that~$\mu(P) = \frac{m-\rk \Lambda}{p}$
for allmost all~$P\in
\Rr^{m\times p}$ with a given lattice~$\Lambda$ of integer relations.
Generalizing Roth's Theorem on rational approximation of algebraic numbers,
Schmidt~\cite{Schmidt_1971} proved that if~$P$ has \emph{algebraic} coefficients, with some
additional hypotheses, then it again holds that~$\mu(P) = \frac{m-\rk \Lambda}{p}$.

All in all, this leads to Algorithm~\ref{algo:integer-relations}.
The heuristic check relies on assuming~$\mu(P) = \smash{\frac{m-\rk \Lambda}{p}}$,
approximating~$\phi(\beta)\simeq\beta^{1/\mu(P)}$, that is $\phi(\beta) \simeq {\beta^{\frac{p}{m-\rk \Lambda}}}$, and applying Proposition~\ref{prop:heuristic}.

\section{Developing an intrinsic measure of error}\label{sec:intrinsic_error}

Working with a finite approximation of periods, there is the possibility of miscomputing the Picard group. Although there will be no miscomputation if the periods are approximated to sufficient precision, we do not know \emph{a priori} what constitutes ``sufficient precision'' for any given example. We can not solve this problem here but we will attempt to facilitate an \emph{a posteriori} certification scheme.

More precisely, we define two intrinsic quantities~$B_{min}$ and~$\tau_N$ related to an algebraic surface~$X$. Then we give an algorithm that computes numerically the Picard group of~$X$ alongside with half-a-certificate $(B,N,\varepsilon) \in \Rr^3_{> 0}$ such that if~$B_{min} < B$ and~$\varepsilon < \tau_N$ then the computation of the Picard group is correct.
Unfortunately, the quantities~$B_{min}$ and~$\tau_N$ are not easily computable.

\subsection{Summary of the results in this section}

There is a canonical \emph{positive definite} inner product on $\H^2(X,\Rr)=\H^2(X,\Zz)\otimes_{\Zz}\Rr$ whose exact formulation we recall in Section~\ref{sec:canonical_norm}. Briefly, it is obtained from the intersection product on cohomology by flipping the sign of the intersection matrix on a certain subspace of $\H^2(X,\Rr)$. We will refer to the norm induced by this inner product as the \emph{canonical norm}.

Let us point out that the homology group $\H_2(X,\Rr)$ and the cohomology group $\H^2(X,\Rr)$ are canonically identified via Poincar\'e duality. If $\{\gamma_i\}_{i=1}^m$ is a basis for homology and the cohomology is equipped with the corresponding dual basis, then the identification $\H_2(X,\Rr) \isoto \H^2(X,\Rr)$ is given by the intersection product matrix $(\gamma_i \cdot \gamma_j)_{i,j}$. For the rest of this section, homology and cohomology will thus be identified. 

Let $\cp \in \Cc^{r\times m}$ be the period matrix of $X$ as in~Section~\ref{sec:periods_and_picard}. Unlike the norm, the matrix $\cp$ depends on the choice of a basis for homology and a basis for the space of holomorphic forms in $\H^2(X,\Cc)$. As outlined in Section~\ref{sec:periods_and_picard}, we can compute a finite approximation of $\cp$ and then, via lattice reduction, compute a lattice $\Lambda \subset \H^2(X,\Zz) \simeq \Zz^{m}$ meant to represent the Picard group of $X$.

We give an explicit construction in Section~\ref{sec:canonical_proofs} of a real number $B$ associated to the computation of $\Lambda$. Let $B_{min}$ be the infimum over real numbers $c$ such that $\pic(X)$ is generated by elements of canonical norm at most $c$. 

\begin{theorem}\label{thm:picard_in_lambda}
  If $B_{min} < B$ then $\pic(X) \subset \Lambda$.
\end{theorem}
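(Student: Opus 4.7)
The plan is to derive Theorem~\ref{thm:picard_in_lambda} from Proposition~\ref{prop:gap} applied to the period matrix $P$ and its integer rounding $P_\beta$, by interpreting the intrinsic bound $B_{min}$ as the canonical-norm counterpart of the Euclidean-norm quantity $\kappa \|b_{\rho+1}\|$ featured there. First I would invoke the Lefschetz $(1,1)$ Theorem~\ref{thm:lefschetz11} to identify $\pic(X)$ with the exact integer relation lattice $\{x \in \Zz^m : xP = 0\}$, so that the output $\Lambda = \langle \pr(b_1), \ldots, \pr(b_\rho)\rangle$ of Algorithm~\ref{algo:integer-relations} is precisely the object analyzed by the results of \S\ref{sec:numer-reconstr-picar}. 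The construction of $B$ in~\S\ref{sec:canonical_proofs} should then be chosen to package the transition between the canonical norm $\|\cdot\|_{\can}$ on $\H^2(X,\Rr)$ and the Euclidean norm on $\Zz^m$ used inside the LLL reduction: $B$ is defined so that any integer class $v$ of canonical norm at most $B$ has Euclidean coordinate norm at most $\kappa \|b_{\rho+1}\|$, where $\kappa = m^{-1} 2^{-(m+1)/2}$ is the LLL constant appearing in Lemma~\ref{lem:project_reduction}.

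Granting this calibration, the main argument mirrors the key step of Lemma~\ref{lem:project_reduction}. Let $v \in \pic(X)$ be any element of canonical norm at most $B_{min} < B$. Since $v P = 0$ exactly, its lift $r(v) = [v P_\beta \mid v] \in R$ satisfies $\|r(v)\| \leq \|v E\| + \|v\| \leq m \|v\|$; combined with the calibration this yields $\|r(v)\| \leq 2^{-(m-1)/2} \|b_{\rho+1}\|$. The LLL property that no $\rho+1$ independent vectors of $R$ can have norm below this threshold forces $r(v) \in \Qq\langle b_1, \ldots, b_\rho\rangle$, and the primitivity of $R$ places $r(v)$ already in the integer span of $b_1,\dotsc,b_\rho$; projecting yields $v \in \Lambda$. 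Since by definition of $B_{min}$ the group $\pic(X)$ is generated by such $v$, we conclude $\pic(X) \subset \Lambda$. Note that only the inclusion $\pic(X) \subset \Lambda$ is claimed, which matches the fact that the complementary $\tau_N$-condition has not been invoked; a spurious relation in $\Lambda \setminus \pic(X)$ is not excluded by this argument alone.

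The main obstacle is not the argument above but the quantitative construction of $B$ in~\S\ref{sec:canonical_proofs} itself. One must explicitly bound the operator norm of the change of basis between the homology basis $\gamma_1,\dotsc,\gamma_m$ used to form $P$ and an orthonormal basis for the canonical inner product, a comparison that depends on the Hodge decomposition of $\H^2(X,\Cc)$ and on the particular basis of holomorphic forms selected to assemble the period matrix. Once this comparison is correctly folded into the definition of $B$ together with the LLL constant $\kappa$, the reduction to Proposition~\ref{prop:gap} is essentially immediate.
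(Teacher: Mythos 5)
Your proposal is correct and follows essentially the same route as the paper: identify $\pic(X)$ with the exact relation lattice via Theorem~\ref{thm:lefschetz11}, use the LLL analysis of \S\ref{sec:numer-reconstr-picar} (your second paragraph simply re-derives the relevant direction of Lemma~\ref{lem:project_reduction}) to get a coordinate-norm threshold $B_\Lambda$ below which exact relations are captured in $\Lambda$, and convert that threshold to the canonical norm to define $B$. The ``main obstacle'' you flag is handled in the paper by the elementary Lemma~\ref{lem:canonical_vs_coordinate}, which compares the canonical and coordinate norms via constants $\xi_1,\xi_2$ computed from rigorous interval bounds on $\norm{\gamma_i^*}$, so no operator-norm analysis of the Hodge decomposition beyond this is needed.
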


Let $\cp_{\Rr} \in \Rr^{2r \times m}$ be the matrix obtained by joining the real and imaginary parts of $\cp$ vertically. Define $U \subset \H^2(X,\Rr)\simeq \Rr^m$ to be the subspace generated by the rows of $\cp_{\Rr}$. With respect to the Hodge decomposition, one would write $U = \H^2(X,\Rr) \cap \left( \H^{2,0}(X) \oplus \H^{0,2}(X) \right)$. Let $U^\perp$ be the orthogonal complement of $U$ with respect to the cap product on cohomology. 

In Section~\ref{sec:canonical_proofs} we give an explicit construction for a pair of positive real numbers $(N,\varepsilon)$ associated to our lattice $\Lambda$. Denote by $\tau_N$ the minimum non-zero distance of vectors of canonical norm at most $N$ in $\H^2(X,\Zz)$ to $U^{\perp}$, that is,
  \begin{equation}
    \tau_N = \min \{\operatorname{dist}_{U^{\perp}}(v) \mid v \in \H^2(X,\Zz) \setminus U^{\perp},\, \norm{v} \le N\}.
  \end{equation}

\begin{theorem}\label{thm:lambda_in_picard}
  If $\varepsilon < \tau_N$ then $\Lambda \subset \pic(X)$.
\end{theorem}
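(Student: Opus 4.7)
The plan is to extract from the output of Algorithm~\ref{algo:integer-relations} explicit real numbers $N,\varepsilon>0$ such that every $v\in\Lambda$ satisfies both $\norm{v}\le N$ in the canonical norm and $\dist_{U^\perp}(v)\le\varepsilon$, and then invoke the definition of $\tau_N$ together with the Lefschetz~(1,1) theorem to finish.

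First I would bound the canonical norm of each $v\in\Lambda$. Since $\Lambda$ is spanned by the projections $\pr(b_1),\dots,\pr(b_\rho)$ of an LLL-reduced basis of the lattice generated by the rows of $[P_\beta \mid I_m]$, every element of $\Lambda$ has Euclidean norm controlled by a quantity depending on $\norm{b_\rho}$. The canonical norm on $\H^2(X,\Rr)$ and the Euclidean norm arising from the chosen integer basis of $\H^2(X,\Zz)$ are equivalent up to a bounded change-of-basis factor depending only on $X$ and that basis; multiplying by this factor yields the $N$ we need.

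Second I would bound $\dist_{U^\perp}(v)$. By LLL reduction, $\norm{v\cdot P_\beta}$ is of the same order as $\norm{b_\rho}$. Writing $P_\beta=\beta P+E$ with the entries of $E$ bounded by $\tfrac12$ in absolute value, this translates into $\norm{v\cdot P}\le C\beta^{-1}\norm{b_\rho}$ for some explicit constant $C$. The rows of $\cp_\Rr$ span $U$, so the linear map $v\mapsto v\cdot P$ vanishes precisely on $U^\perp$, and its smallest nonzero singular value $\sigma_{\min}$ (with respect to the canonical inner product) satisfies $\norm{v\cdot P}\ge \sigma_{\min}\dist_{U^\perp}(v)$. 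Combining the two inequalities gives $\dist_{U^\perp}(v)\le \sigma_{\min}^{-1}\norm{v\cdot P}$, which is our $\varepsilon$.

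With $N$ and $\varepsilon$ chosen this way, the conclusion is immediate: by assumption $\varepsilon<\tau_N$, and $\dist_{U^\perp}(v)\le\varepsilon<\tau_N$ combined with $\norm{v}\le N$ forces $v\in U^\perp$ by the very definition of $\tau_N$. Since $v\in\H^2(X,\Zz)$, Theorem~\ref{thm:lefschetz11} identifies $v$ as an element of $\pic(X)=U^\perp\cap \H^2(X,\Zz)$. The main obstacle is the second step: quantitatively relating the algorithmically accessible quantity $\norm{v\cdot P}$ to the intrinsic quantity $\dist_{U^\perp}(v)$ measured in the canonical norm. The constants involved are $\sigma_{\min}$ and the distortion between the Euclidean and canonical structures; both are in principle computable once the Hodge decomposition and intersection product are fixed, but neither can be predicted \emph{a priori} from the defining equation of $X$ alone, which is precisely why the pair $(N,\varepsilon)$ constitutes only half a certificate.
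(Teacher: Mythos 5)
Your core deduction is exactly the paper's: once $N$ and $\varepsilon$ are numbers such that a generating set of $\Lambda$ has canonical norm below $N$ and distance to $U^\perp$ below $\varepsilon$, the hypothesis $\varepsilon<\tau_N$ together with the definition of $\tau_N$ forces those generators into $U^\perp$, and Theorem~\ref{thm:lefschetz11} then places them in $\pic(X)$. Where you differ is in how $(N,\varepsilon)$ are produced. The paper simply takes a basis $v_1,\dots,v_\rho$ of $\Lambda$ and evaluates $\norm{v_i}$ and $\dist_{U^\perp}(v_i)=\norm{P_U(v_i)}$ rigorously with the ball-arithmetic projector of Section~\ref{sec:canonical_rigor} (Algorithm~\ref{alg:projector}), then defines $N,\varepsilon$ as bounds on these finitely many intervals; no singular-value analysis is needed. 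You instead propagate the LLL bounds (essentially Lemma~\ref{lem:project_reduction}: $\norm{vP}\le C\beta^{-1}\norm{b_\rho}$ for the generators) and convert $\norm{vP}$ into $\dist_{U^\perp}(v)$ via a lower bound $\sigma_{\min}$ on the smallest nonzero singular value of $v\mapsto vP$ with respect to the canonical inner product. That route is valid (the kernel of $v\mapsto vP$ is indeed $U^\perp$ after the homology--cohomology identification by the intersection matrix), and it has the virtue of tying $\varepsilon$ directly to $\beta$ and the LLL output; but it requires a rigorous lower bound on $\sigma_{\min}$ from the approximate period matrix, an extra certified computation that the paper's direct interval evaluation of $P_U(v_i)$ avoids. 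Two small corrections: your opening claim that \emph{every} $v\in\Lambda$ satisfies $\norm{v}\le N$ cannot hold for an infinite lattice --- restrict the bounds to the generators $\pr(b_1),\dots,\pr(b_\rho)$ and add the (one-line) remark that $\pic(X)$ is a subgroup of $\H^2(X,\Zz)$, so containing a generating set suffices; and the reason the triple is only half a certificate is not that $\sigma_{\min}$ or the norm-distortion constants are inaccessible (they are computable a posteriori from the approximate periods), but that the intrinsic thresholds $\tau_N$ and $B_{min}$ against which $(N,\varepsilon,B)$ must be compared are out of reach.
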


\noindent The two theorems stated above are proven in Section~\ref{sec:canonical_proofs}.

\subsection{Canonical norm}\label{sec:canonical_norm}

Let $X \subset \ppp$ be a smooth surface of  degree $d$. There is a canonical positive definite intersection product on $\H^2(X,\Rr)$~\cite[Example~3.1.7(ii)]{huybrechts--k3}. For the proofs of the statements made here, we refer to \emph{loc.\ cit.} %

If $(\gamma_{i}\cdot \gamma_j)_{i,j}$ is the intersection matrix on homology $\H_2(X,\Rr)$ with respect to a basis $\{\gamma_i\}_{i=1}^m$, then the inverse matrix $(\gamma_i \cdot \gamma_j)^{-1}_{i,j}$ represents the matrix for the cup product $(\gamma_i^* \cup \gamma_j^*)_{i,j}$ on cohomology $\H^2(X,\Rr)$ with respect to the dual basis $\{\gamma_i^*\}_{i=1}^m$. Note that the cup product on cohomology is available to us.

By abuse of notation, we will denote by $h_X$ the image of the polarization with respect to the identification $\H_2(X,\Rr) \isoto \H^2(X,\Rr)$. Recall that $U$ has been defined as the subspace of cohomology generated by the real and imaginary parts of a period matrix $\cp$ of $X$. Let  $U^\perp \subset \H^2(X,\Rr)$ be the space orthogonal to $U$. If we denote by $W \subset U^{\perp}$ the subspace in $U^{\perp}$ orthogonal to $h_X$ then we get an orthogonal decomposition of cohomology with respect to the cap product: 
\begin{equation}\label{eq:decomposition}
  \H^2(X,\Rr) = U \oplus W \oplus \Rr\langle h_X \rangle.
\end{equation}

The cup product on cohomology is positive definite on $U$ and $\Rr\langle h_X \rangle$ but negative definite on $W$. The canonical positive definite metric on $\H^2(X,\Rr)$ is obtained by flipping the sign of the cup product on $W$. In particular, for $v \in \H^2(X,\Rr)$ if we write $v=v_U + v_W + v_h$ by respecting the decomposition \eqref{eq:decomposition} then the canonical norm is given by:
\begin{equation}\label{eq:can_norm}
  \norm{v}^2 = v_U\cup v_U - v_W\cup v_W + v_h \cup v_h.
\end{equation}
It is easy to see that $v_h = \frac{v\cup h_X}{d} h_X$, where $d=\deg X = h_X \cup h_X$. Let us define $v_{prim}= v - v_h$. Substituting these two terms into \eqref{eq:can_norm} we get the following expression:
\begin{equation}\label{eq:can_norm_2}
  \norm{v}^2 = v\cup h_X - v_{prim}\cup v_{prim}+ 2v_{prim} \cup v_U .
  \end{equation}
 Let us emphasize that every term in this expression, with the exception of $v_U$, is available to us. We now turn to the problem of approximating $v_U$.
  
 \subsection{Rigorous bounds for the canonical norm}\label{sec:canonical_rigor}
  
  In order to use \eqref{eq:can_norm_2}, we want to find the orthogonal projection operator $P_U \colon \H^2(X,\Rr) \to U ; v\mapsto v_U$. This operator can be constructed from the period matrix $\cp$ of $X$. We note here that \eqref{eq:can_norm_2} varies continuously in $v_U$, that is, small errors in expressing $P_U$ will miscompute $\norm{v}$ by a small constant of proportionality.

  Let us denote by $\Ii \in \Zz^{m \times m}$ the matrix representing the cup product on cohomology, with respect to the trivialization $\H^2(X,\Zz)\simeq \Zz^m$ used to compute $\cp$. Recall that $\cp_{\Rr}$ is the vertical join of the real and imaginary parts of $\cp$. As the cup product is positive definite on $U$, we can define $\cp_{\Rr}^{on} \in \Rr^{2r\times m}$ to be the matrix whose rows are obtained by the Gramm--Schmidt orthonormalization process from the rows of $\cp_{\Rr}$ with respect to $\Ii$. In coordinates, the projection operator $P_U \colon \H^2(X,\Rr) \to U$ is given by the matrix $\cp_{\Rr}^{on} \cdot \Ii$.

  In practice, we only have a rational approximation $\cq_{\Rr}$ of $\cp_{\Rr}$ and a matrix $E$ such that the absolute value of the $(i,j)$-th entry of the difference $\cp_{\Rr}-\cq_{\Rr}$ is bounded from above by the $(i,j)$-th entry of $E$. In other words, we have a \emph{rectangular neighbourhood} containing $\cp_{\Rr}$. This allows us to compute an approximation $P_U^{\square}$ of the projection operator $P_U$ using Algorithm~\ref{alg:projector} such that $P_U^{\square}(v)$ returns a rectangular neighbourhood $v_U^{\square}$ of $v_U$. Substituting $v_U^{\square}$ for $v_U$ in \eqref{eq:can_norm_2} we get an interval which contains $\norm{v}$. 
  
  In particular, for Theorem~\ref{thm:lambda_in_picard} we need the ability to bound the distance to $U^{\perp}$. For any $v$, this distance is $\operatorname{dist}_{U^{\perp}}(v) = \norm{P_U (v)}$.  We can therefore bound $\operatorname{dist}_{U^{\perp}}(v)$ by computing $v_U^{\square}$ and taking the maximum of the interval $v_U^{\square} \cup v_U^{\square}$.

\begin{algorithm}[t] \centering
  \begin{description}[leftmargin=5em,style=nextline]
    \item[Input.] A rectangular neighbourhood of $\cp_{\Rr}$, that is, $(\cq_{\Rr},E)$.
    \item[Output.]  An operator $P^{\square}_U$ which takes $v \in \H^2(X,\Zz)$ and gives a rectangular neighbourhood containing $v_U$.
  \end{description} \medskip
  \begin{enumerate}
    \item Using ball arithmetic, orthonormalize the matrix of intervals with center $\cq_{\Rr}$ and radius $E$. Let $\cb$ denote the resulting matrix of intervals.
    \item The orthonormalization $\cp^{on}_\Rr$ of $\cp_{\Rr}$ is contained in $\cb$. Therefore, for any $v$ the operator $P^{\square}_U$ defined by $v \mapsto \cb \cdot \Ii \cdot v$ is what we want.
  \end{enumerate}
  \caption{Computing a rectangular neighbourhood of the projection operator.}
  \label{alg:projector}
\end{algorithm}

  \subsection{Computing half of the certificate}\label{sec:canonical_proofs}

  Let $\H^2(X,\Zz)$ be identified with $\Zz^{m}$ by the choice of a basis $\gamma_1^*,\dots,\gamma_m^*$ in which the period matrix $\cp$ is computed. For $v \in \H^2(X,\Rr)$ write $v=\sum_{i=1}^m v_i \gamma_i^*$ and define the \emph{coordinate norm} $|v| = \sqrt{\sum_{i=1}^m v_i^2}$. Recall that our approximation $\cq_\Rr$ of $\cp_\Rr$ comes with rigorous error bounds which we can collect into the matrix $E$ to form the rectangular neighbourhood $(\cq_\Rr,E)$.
  
  Let $\Lambda \subset \Zz^m$ be the candidate Picard group of $X$ computed using the approximation $\cq_\Rr$ of $\cp_{\Rr}$ as in Section~\ref{sec:periods_and_picard}. The lattice $\Lambda$ is computed according to an exact procedure detailed in Section~\ref{sec:numer-reconstr-picar}. In order to claim that $\Lambda$ contains $\pic(X)$ we need a careful study the lattice reduction algorithm by which $\Lambda$ is produced. This study takes place in Section~\ref{sec:numer-reconstr-picar} and that section allows us to compute $\Lambda$ together with a real number $B_\Lambda$. This number $B_\Lambda$ has the property that a vector $v \in \pic(X) \subset \Zz^m$ is contained in $\Lambda$ if the coordinate norm $|v|$ is less than $B_\Lambda$. We will now unshackle this bound from the coordinates chosen and phrase it in terms of the canonical norm.

  \begin{lemma}\label{lem:canonical_vs_coordinate}
  For any $v \in \H^2(X,\Zz)$ the canonical norm is bounded by the coordinate norm as follows: $|v| \left( \sum_{i=1}^m \norm{\gamma_i^*}^{-2} \right)^{-1/2} \le \norm{v} \le |v| \left( \sum_{i=1}^m \norm{\gamma_i^*}^2 \right)^{1/2}$.
\end{lemma}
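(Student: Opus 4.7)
The statement is a pair of norm-comparison inequalities on the finite dimensional space $\H^2(X,\Rr)$, between the canonical norm $\norm{\cdot}$ of Section~\ref{sec:canonical_norm} and the coordinate norm $|\cdot|$ associated to the basis $\gamma_1^*,\dots,\gamma_m^*$. Both halves are in principle consequences of the triangle inequality and Cauchy--Schwarz. For the upper bound, writing $v = \sum_i v_i \gamma_i^*$, applying the triangle inequality for $\norm{\cdot}$, and then applying Cauchy--Schwarz to the sequences $(|v_i|)_i$ and $(\norm{\gamma_i^*})_i$ immediately yields $\norm{v} \le \sum_i |v_i|\,\norm{\gamma_i^*} \le |v|\bigl(\sum_i \norm{\gamma_i^*}^2\bigr)^{1/2}$, which is the routine half of the lemma.

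For the lower bound, the natural plan is to dualize. Let $\gamma_1^\vee,\dots,\gamma_m^\vee$ denote the basis of $\H^2(X,\Rr)$ dual to $\gamma_1^*,\dots,\gamma_m^*$ with respect to the canonical inner product, so that $v_i = \langle v,\gamma_i^\vee\rangle$ for every $v$. Cauchy--Schwarz then gives $|v|^2 = \sum_i \langle v,\gamma_i^\vee\rangle^2 \le \norm{v}^2 \sum_i \norm{\gamma_i^\vee}^2$, reducing the proof to comparing $\sum_i \norm{\gamma_i^\vee}^2$ with $\sum_i \norm{\gamma_i^*}^{-2}$.

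The main obstacle is precisely this last comparison. If $G$ denotes the Gram matrix with $G_{ij} = \langle \gamma_i^*,\gamma_j^*\rangle$, then $\norm{\gamma_i^*}^2 = G_{ii}$ while $\norm{\gamma_i^\vee}^2 = (G^{-1})_{ii}$; the only pointwise relation at hand comes from Cauchy--Schwarz applied to $\langle \gamma_i^*,\gamma_i^\vee\rangle = 1$ and gives $\norm{\gamma_i^\vee} \ge \norm{\gamma_i^*}^{-1}$, which points in the wrong direction. I would therefore expect the argument to either exploit additional structure of the pairing coming from the splitting $U \oplus W \oplus \Rr\langle h_X\rangle$ of Section~\ref{sec:canonical_norm}, or to pass through a spectral inequality such as $\lambda_{\min}(G) \ge 1/\mathrm{tr}(G^{-1})$ combined with a trace estimate in terms of the diagonal $G_{ii}$ in order to land on the claimed constant $\bigl(\sum_i \norm{\gamma_i^*}^{-2}\bigr)^{-1/2}$.
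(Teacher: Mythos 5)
Your upper bound is exactly the paper's: triangle inequality plus Cauchy--Schwarz, so that half is fine. For the lower bound, however, your proposal stops short of a proof: you reduce the claim to comparing $\sum_i \norm{\gamma_i^\vee}^2$ with $\sum_i \norm{\gamma_i^*}^{-2}$ and then observe (correctly) that Cauchy--Schwarz applied to $\langle \gamma_i^*,\gamma_i^\vee\rangle=1$ points the wrong way, without resolving the issue. That is a genuine gap. The paper's own argument avoids the dual basis altogether: it asserts the componentwise bound $|v_i|\,\norm{\gamma_i^*}\le\norm{v}$ for each $i$, ``as $\norm{\cdot}$ is induced from an inner product'', from which $|v|^2=\sum_i v_i^2\le\norm{v}^2\sum_i\norm{\gamma_i^*}^{-2}$ is immediate. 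So the single missing step, relative to the paper, is this pointwise estimate.

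That said, your obstruction is not an artifact of your route, and it is worth recording why. Since $v_i=\langle v,\gamma_i^\vee\rangle$, the best constant $C$ in $|v_i|\le C\,\norm{v}$ over all real $v$ is $C=\norm{\gamma_i^\vee}$, so the paper's observation is equivalent to $\norm{\gamma_i^\vee}\,\norm{\gamma_i^*}\le 1$; together with your Cauchy--Schwarz remark $\norm{\gamma_i^\vee}\,\norm{\gamma_i^*}\ge 1$, this forces equality, i.e.\ $\gamma_i^\vee$ proportional to $\gamma_i^*$, which happens precisely when $\gamma_i^*$ is orthogonal (for the canonical inner product) to the span of the other basis vectors. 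In general the valid componentwise bound is $|v_i|\,\dist\bigl(\gamma_i^*,\mathrm{span}\{\gamma_j^*\}_{j\ne i}\bigr)\le\norm{v}$, that is, with $1/\norm{\gamma_i^\vee}$ in place of $\norm{\gamma_i^*}$. So you located the right pressure point --- the paper's one-line justification silently uses an orthogonality-type property of the basis --- but your write-up neither supplies that property nor completes an alternative argument; to close the gap you should either justify $|v_i|\,\norm{\gamma_i^*}\le\norm{v}$ for the specific basis at hand, or prove and use the lemma with the dual-basis constant $\bigl(\sum_i\norm{\gamma_i^\vee}^2\bigr)^{-1/2}$ instead of $\bigl(\sum_i\norm{\gamma_i^*}^{-2}\bigr)^{-1/2}$.
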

\begin{proof}
  Use the triangle inequality and Cauchy--Schwartz on $\norm{\cdot}$ for the upper bound. The lower bound follows from the observation $|v_i| \norm{\gamma_i^*} \le \norm{v}$, as $\norm{\cdot}$ is induced from an inner product.
\end{proof}

In particular, upper and lower bounds of the canonical norms of $\gamma_i^*$ can be computed as in Section~\ref{sec:canonical_rigor}. Using these bounds and Lemma~\ref{lem:canonical_vs_coordinate} we can compute $\xi_1,\xi_2 \in \Rr_{>0}$ such that for every $v \in \H^2(X,\Rr)$ we have $\xi_1 |v| \le \norm{v} \le \xi_2 |v|$. We now define $B$ to be $\xi_2 B_\Lambda$.

\begin{proof}[Proof of Theorem~\ref{thm:picard_in_lambda}]
   By construction of $B_\Lambda$, if $v \in \pic(X)$ satisfies $|v| < B_\Lambda$ then $v \in \Lambda$. Using Lemma~\ref{lem:canonical_vs_coordinate} and by definition of $B$ we have $\norm{v} \le \xi_2|v| < \xi_2 B_\lambda=B$. By hypothesis, $\pic(X)$ is generated by its elements of canonical norm at most $B_{min} < B$. Therefore, a generating set for $\pic(X)$ is contained in $\Lambda$.
\end{proof}

Let $v_1,\dots,v_{\rho} \in \Lambda$ be a basis. Using Section~\ref{sec:canonical_rigor} compute for each $i$ the interval $\norm{v_i}^{\square}$ containing the canonical norm $\norm{v_i}$ and compute the interval containing the distance $\dist_{U^\perp}(v_i)$. Using these intervals we define $N,\varepsilon \in \Rr_{>0}$ such that $\norm{v_i} <N$ and $\dist_{U^\perp}(v_i) < \varepsilon$. 

\begin{proof}[Proof of Theorem~\ref{thm:lambda_in_picard}]
  An element $v\in \H^2(X,\Zz)$ is in $\pic(X)$ if and only if $\dist_{U^\perp}=0$ by Lefschetz (1,1) theorem. If any $v$ of $\H^2(X,\Zz)$ having canonical norm at most $N$ is either in $\pic(X)$ or satisfies $\dist_{U^\perp}(v) > \varepsilon$ then, by definition of $N$ and $\epsilon$, $\Lambda \subset \pic(X)$. The hypothesis $\varepsilon < \tau_N$ ensures precisely this condition. 
\end{proof}
  
\section{Hypersurfaces of arbitrary even dimension}\label{sec:higher_degree_dimension}

Let $k$ be a positive integer and let $X \subset \Pp^{2k+1}$ be a smooth hypersurface. Using Lefschetz hyperplane theorem and Poincar\'e duality we see that the cohomology groups $\H^i(X,\Zz)$ are either trivial or $\Zz$ except when $i=2k$. The Hodge decomposition on de Rham cohomology gives
\begin{equation}
  \H_{\dR}^{2k}(X,\Cc) = \bigoplus_{p+q=2k} \H^{p,q}(X,\Cc).
\end{equation}
Algebraic cycles of dimension $k$ in $X$ give cohomology classes in 
\begin{equation}
  \hodge^k(X) \eqdef \H^{k,k}(X,\Cc)\cap \H^{2k}(X,\Zz). 
\end{equation}
As a generalization of Theorem~\ref{thm:lefschetz11},
Hodge conjecture predicts that the vector space $\hodge^k(X)\otimes_{\Zz}\Qq$ is
spanned by algebraic cycles~\cite{voisin--hodge_conjecture}. %

The Hodge group $\hodge^k(X)$ comes with an intersection pairing obtained by restricting the cup product on cohomology $\H^{2k}(X,\Cc)$. Furthermore, there is a \emph{polarization} $h_X^{k} \in \hodge^k(X)$ where $h_X$ is the class of a generic hyperplane section of $X$.  The tools we used to tackle the computation of Picard groups apply to the following following problem:

{\em
  Given the defining equation of $X\subset\Pp^{2k+1}$, compute the rank $\rho$ of $\hodge^k(X)$, the $\rho\times\rho$ matrix of the intersection product and the $\rho$ coordinates of the polarization $h_X^{k}$ in some basis of~$\hodge^k(X) \simeq \Zz^\rho$.
\/}

Suppose now that $\gamma_1,\dots,\gamma_m \in \H_{2k}(X,\Zz)$ is a basis for the middle homology group of $X$. We can then identify the cohomology $\H^{2k}(X,\Cc) = \operatorname{Hom}(\H_{2k}(X,\Zz),\Cc)$ with $\Cc^m$ via the dual basis of $\{\gamma_i\}_{i=1}^{m}$. Let us write $F^{2k,\ell}(X,\Cc) = \bigoplus_{j=0}^{\ell} \H^{2k-j,j}(X,\Cc)$ for the corresponding Hodge filtration.

Let $\omega_1,\dots,\omega_s \in F^{2k,k-1}(X,\Cc)$ be a basis for the $(k-1)$-th part of the Hodge filtration. Suppose that we have the coordinates of $\omega_i$ with respect to the identification $\H^n(X,\Cc)\simeq \Cc^m$, that is, suppose that for each $i=1,\dots,s$ the following integrals are known
\begin{equation}
  \left(\int_{\gamma_1} \omega_i,\dots,\int_{\gamma_m} \omega_i\right) \in \Cc^m.
\end{equation}
These \emph{periods} of $X$ are listed as the columns of the following matrix:
\begin{equation}\label{eq:hdg_P}
  \cp \eqdef \left( \int_{\gamma_i} \omega_j \right)_{\substack{i=1,\dots,m \\ j=1,\dots,s}}.
\end{equation}
The matrix $\cp$ induces the linear map $\cp_\Zz\colon\Zz^{m} \to \Cc^s$ by acting on the integral vectors from the right.

\begin{lemma}\label{lem:kernel}
  We have a natural isomorphism $\hodge^k(X) \simeq \ker \cp_\Zz$.
\end{lemma}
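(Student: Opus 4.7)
The plan is to translate the defining condition of $\ker \cp_\Zz$ into a Hodge-theoretic orthogonality statement on cohomology, and then to use Hodge symmetry to recognize the resulting subgroup as $\hodge^k(X)$. The natural identification $\Zz^m \simeq \H_{2k}(X,\Zz)$ coming from the chosen basis $\gamma_1,\dots,\gamma_m$ of homology sends $a = (a_1,\dots,a_m)$ to $\gamma_a := \sum_i a_i \gamma_i$, and by linearity $\cp_\Zz(a) = 0$ is equivalent to $\int_{\gamma_a} \omega = 0$ for every $\omega$ in the $\Cc$-span of $\omega_1,\dots,\omega_s$, hence for every $\omega \in F^{2k,k-1}(X,\Cc)$.

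Next I would invoke Poincar\'e duality $\mathrm{PD} \colon \H_{2k}(X,\Zz) \isoto \H^{2k}(X,\Zz)$, under which $\int_{\gamma_a}\omega = \int_X \mathrm{PD}(\gamma_a) \cup \omega$. Writing $v_a = \mathrm{PD}(\gamma_a)$, the condition $\cp_\Zz(a) = 0$ becomes $v_a \cup \omega = 0$ in $\H^{4k}(X,\Cc) \simeq \Cc$ for every $\omega \in F^{2k,k-1}(X,\Cc)$. The cup product pairs $\H^{p,q}$ perfectly with $\H^{q,p}$ and annihilates all other Hodge bigradings, so the annihilator of $F^{2k,k-1} = \bigoplus_{q \leq k-1} \H^{2k-q,q}$ is exactly $\bigoplus_{p \geq k} \H^{p,2k-p}$. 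Thus $\cp_\Zz(a) = 0$ if and only if $v_a$ lies in this Hodge filtration piece.

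To conclude, I would observe that $v_a$ is real, being integral. Hodge symmetry $\overline{\H^{p,q}} = \H^{q,p}$ then forces a real class lying in $\bigoplus_{p \geq k}\H^{p,2k-p}$ to also lie in $\bigoplus_{q \geq k}\H^{2k-q,q}$; intersecting and using $p+q = 2k$ leaves only the $(k,k)$-component. Hence $v_a \in \H^{k,k} \cap \H^{2k}(X,\Zz) = \hodge^k(X)$. Every element of $\hodge^k(X)$ arises from a unique $a \in \ker \cp_\Zz$ by bijectivity of $\mathrm{PD}$, yielding the claimed natural isomorphism $\hodge^k(X) \simeq \ker \cp_\Zz$.

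The main point requiring care, I expect, is the implicit identification of $\Zz^m$: the vector $a$ is read as a \emph{homology} class in the definition of $\cp_\Zz$, whereas $\hodge^k(X)$ is defined in \emph{cohomology}. Making explicit that the ``natural'' isomorphism is mediated by Poincar\'e duality, and checking that it respects the integral structures (which uses that $\H^{2k}(X,\Zz)$ is torsion-free for smooth hypersurfaces), is what ties the two sides together cleanly.
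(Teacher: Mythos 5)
Your proposal is correct and follows essentially the same route as the paper's proof: translate $\ker\cp_\Zz$ into the classes annihilated by $F^{2k,k-1}(X,\Cc)$, use reality of integral classes together with Hodge symmetry to see they are also annihilated by $\overline{F^{2k,k-1}}$, identify the orthogonal complement of $F^{2k,k-1}\oplus\overline{F^{2k,k-1}}$ under the cup product with $\H^{k,k}(X,\Cc)$, and pass between homology and cohomology by Poincar\'e duality. Your write-up merely makes explicit the annihilator computation and the integrality bookkeeping that the paper leaves implicit.
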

\begin{proof}
  The kernel of $\cp_{\Zz}$ computes in $\H_{2k}(X,\Zz)$ the classes annihilated by $F^{2k,k-1}(X,\Cc)$. Any integral (or real) class annihilated by $F^{2k,k-1}$ will also be annihilated by its complex conjugate $\overline{F^{2k,k-1}}$. We now use the equality $\H^{k,k}(X,\Cc) = \left( F^{2k,k-1} \oplus \overline{F^{2k,k-1}} \right)^\perp $ and the definition of Poincar\'e duality. 
\end{proof}

  The kernel of $\cp_{\Zz}$ sits most naturally in homology $\H_{2k}(X,\Zz)$ and is denoted by $\hodge_k(X)$. Using Sert\"oz~\cite{sertoz18} we can approximate the matrix $\cp$ to the desired degree of accuracy for an automatically generated basis of $F^{2k,k-1}(X,\Cc)$ and some implicit basis of $\H_{2k}(X,\Zz)$. The basis of $\H_{2k}(X,\Zz)$ comes with an intersection pairing as well as the coordinates of the polarization $h_{X}^k$ in this basis. 

In light of Lemma~\ref{lem:kernel}, we need to compute integral linear relations between the columns of $\cp$,
that we compute numerically and we recover~$\hodge_k(X)$ with lattice reduction algorithms, see~\S \ref{sec:numer-reconstr-picar}.
To be more precise, identify $\H_{2k}(X,\Zz)$ with $\Zz^m$ by choosing a basis. In turn, $\hodge_k(X)$ is identified with a sublattice $\Lambda \subset \Zz^m$. What we can compute is the following sublattice for some $B,\, \varepsilon \in \Rr_{> 0}$:
\begin{equation}
  \Lambda_{B,\varepsilon} = \{ a \in \Zz^m \mid \norm{\cp_{\Zz}(a)} < \varepsilon,\, \norm{a} < B\}.
\end{equation}

\begin{example}
  With minimal effort, the period computations explained in \cite{sertoz18} can be extended to mildly singular hypersurfaces in $\Pp^{2k+1}$ for $k>1$. We computed the periods of the Delsarte surface cut out by the quintic polynomial
  \begin{equation}
    x^5 + xzw^3 + y^5 + z^4w.
  \end{equation}
  We see that the Picard number of this surface is 13, in agreement with \cite[\S6]{schuett15}.
\end{example}

The study of the Hodge groups of cubic fourfolds is an active area of research~\cite{ranestad17, addington17}. Although generic cubic fourfolds provide a computational challenge, we can quickly compute the Hodge rank of sparse cubic fourfolds if most of the monomial terms are cubes of a single variable. 

\begin{example}
 Let $X$ be the cubic fourfold in $\Pp^5$ cut out by the equation 
 \begin{equation}
   6x_0^3 + 10x_0x_2x_4 + 9x_0x_2x_5 + 4x_1^3 + 2x_1x_2^2 + 4x_2^3 + 3x_3^3 + 4x_4^3 + 9x_5^3.
 \end{equation}
 We find that $\hodge^2(X)$ is of rank 13. We used 400 digits of precision and can certifiably say that we computed $\Lambda_{B,\varepsilon}$ with $B \approx 10^{70}$ and $\varepsilon \approx 10^{-330}$, see Lemma~\ref{lem:project_reduction}.
\end{example}

\section{Experimental results}
\label{sec:experiment}

\begin{figure}[t]
  \centering
  \begin{tabular}{cc}
  \toprule
  Defining polynomial & Picard number \\
  \midrule
    $-26x^4 - 88x^3y + 32y^3z + 93z^3w + 46w^4$ & 1\\
  $x^3y + z^4 + y^3w + zw^3$ & 4 \\
  $x^3y + y^4 + z^3w + yw^3 + zw^3$ & 6 \\
  $x^3y + z^4 + y^3w + xw^3 + w^4$ & 8 \\
  $x^3y + z^4 + y^3w + w^4$ & 10 \\
  $x^3y + y^4 + z^3w + x^2w^2 + w^4$ & 12 \\
  $x^3y + y^4 + z^3w + yw^3 + w^4$ & 14 \\
  $x^3y + y^3z + z^4 + xy^2w + zw^3$ & 15 \\
  $x^3y + y^4 + z^3w + xyw^2 + y^2w^2 + w^4$ & 16 \\
  $x^3y + y^4 + z^4 + x^2w^2 + zw^3$ & 17 \\
  $x^3y + x^3z + y^3z + yz^3 + w^4$ & 18 \\
  $x^3y + z^4 + y^3w + xyzw + xw^3$ & 19 \\
  $x^3y + z^4 + y^3w + xw^3$ & 20 \\
  \bottomrule
\end{tabular}
\caption{Specimen polynomial for each Picard number found}
\label{fig:picspecimen}
\end{figure}

We documented the Picard groups of $2790$ quartic surfaces defined by sparse polynomials.
For these computations, setting up the initial value problem for the periods was not the limiting factor but rather the numerical solution of these initial value problems took the greatest amount of time.
With our current methods, the periods of a quartic surface defined by dense polynomials can not be computed in a reasonable amount of time, say, less than a day.

The database presented here relies on a systematic exploration of quartics that are defined by a sum of at most six monomials in $x,y,z,w$ with coefficients $0$ or $1$. We built a graph whose vertices store the defining polynomials and an edge between two polynomial is constructed if the difference of the two polynomials is supported on at most two monomials (this is done to ease the computations).
Then, for each edge, we setup and attempt to solve the initial value problem defining the transition matrix from one set of periods onto the other, using 300 decimal digits of precision, see~\cite{sertoz18} for details.
Computation is stopped if it takes longer than an hour and the edge deleted.
Having explicit formulas for the periods of Fermat surface $\left\{ x^4+y^4+z^4+w^4 = 0 \right\}$,
we can compute the periods of any vertex in the connected component of $x^4+y^4+z^4+w^4$ in the resulting graph by simply multiplying the transition matrices of each edge along a path.

For each of the $2790$ polynomials in our database, we computed the Picard group, the polarization, the intersection product, the endomorphism ring and the number of smooth rational curves of degree up to 4.\footnote{Results are compiled at \url{https://pierre.lairez.fr/picdb}. This page will be continuously updated.} We found quartic surfaces with Picard number $4, 6, 8, 10, 12$, $14,15,16,17,18,19,20$, see Figures~\ref{fig:picspecimen} and~\ref{fig:picrk}.
When possible, we checked that our results were consistent with Shioda's formula
for $4$-nomial quartic surfaces~\cite{shioda--delsarte}, reduction methods with Costa's
implementation~\cite{costa-phd} and symbolic line counting.

\begin{example}
  $\{ x^4 + y^3z + xyzw + z^3w + yw^3 = 0 \}$.
  This surface has Picard number 19. It contains no smooth rational curves of degree $<4$, and 133056~smooth rational curves of degree~4. They generate the Picard group.
\end{example}

\begin{example}
  $\{ x^3y + x^3z + y^3z + yz^3 + z^4 + xw^3 = 0 \}$.
  This surface has Picard number 10. It contains 13 lines that generate the Picard group.
  The endomorphism ring is~$\Qq(\exp(\frac{2\pi i}{18}))$, a cyclotomic extension
  of~$\Qq$ of degree~$6$.
  Up to degree~10, the smooth rational curves inside~$X$ count as follows.
  
  \medskip\centering
  \begin{tabular}{lcccccccccc}\toprule
    $d$ & 1 & 2 & 3 & 4 & 5 & 6 & 7 & 8& 9 & 10 \\
    $\# \mathcal{R}_d$ & 13 & 0 & 0 &  108& 0& 0& 972& 0& 0& 3996\\
    \bottomrule
  \end{tabular}
  \medskip
\end{example}

\begin{example}
  $\{ -26x^4 - 88x^3y + 32y^3z + 93z^3w + 46w^4 = 0 \}$.
  This surface has Picard number 1. (It is not part of the main dataset, we searched specifically for such a surface.)
\end{example}

\begin{example}
  $\{ x^3y + z^4 + y^3w + zw^3 = 0\}$.
  This surface has Picard number $4$. It contains exactly 4 lines that generate
  the Picard group and no other smooth rational curve of degree~$<100$.
  The endomorphism ring is~$\Qq(\exp(\frac{2\pi i}{54}))$, a cyclotomic extension
  of~$\Qq$ of degree~$18$.
\end{example}

\begin{figure}[t]\centering
\begingroup
  \makeatletter
  \providecommand\color[2][]{%
    \GenericError{(gnuplot) \space\space\space\@spaces}{%
      Package color not loaded in conjunction with
      terminal option `colourtext'%
    }{See the gnuplot documentation for explanation.%
    }{Either use 'blacktext' in gnuplot or load the package
      color.sty in LaTeX.}%
    \renewcommand\color[2][]{}%
  }%
  \providecommand\includegraphics[2][]{%
    \GenericError{(gnuplot) \space\space\space\@spaces}{%
      Package graphicx or graphics not loaded%
    }{See the gnuplot documentation for explanation.%
    }{The gnuplot epslatex terminal needs graphicx.sty or graphics.sty.}%
    \renewcommand\includegraphics[2][]{}%
  }%
  \providecommand\rotatebox[2]{#2}%
  \@ifundefined{ifGPcolor}{%
    \newif\ifGPcolor
    \GPcolortrue
  }{}%
  \@ifundefined{ifGPblacktext}{%
    \newif\ifGPblacktext
    \GPblacktexttrue
  }{}%
  \let\gplgaddtomacro\g@addto@macro
  \gdef\gplbacktext{}%
  \gdef\gplfronttext{}%
  \makeatother
  \ifGPblacktext
    \def\colorrgb#1{}%
    \def\colorgray#1{}%
  \else
    \ifGPcolor
      \def\colorrgb#1{\color[rgb]{#1}}%
      \def\colorgray#1{\color[gray]{#1}}%
      \expandafter\def\csname LTw\endcsname{\color{white}}%
      \expandafter\def\csname LTb\endcsname{\color{black}}%
      \expandafter\def\csname LTa\endcsname{\color{black}}%
      \expandafter\def\csname LT0\endcsname{\color[rgb]{1,0,0}}%
      \expandafter\def\csname LT1\endcsname{\color[rgb]{0,1,0}}%
      \expandafter\def\csname LT2\endcsname{\color[rgb]{0,0,1}}%
      \expandafter\def\csname LT3\endcsname{\color[rgb]{1,0,1}}%
      \expandafter\def\csname LT4\endcsname{\color[rgb]{0,1,1}}%
      \expandafter\def\csname LT5\endcsname{\color[rgb]{1,1,0}}%
      \expandafter\def\csname LT6\endcsname{\color[rgb]{0,0,0}}%
      \expandafter\def\csname LT7\endcsname{\color[rgb]{1,0.3,0}}%
      \expandafter\def\csname LT8\endcsname{\color[rgb]{0.5,0.5,0.5}}%
    \else
      \def\colorrgb#1{\color{black}}%
      \def\colorgray#1{\color[gray]{#1}}%
      \expandafter\def\csname LTw\endcsname{\color{white}}%
      \expandafter\def\csname LTb\endcsname{\color{black}}%
      \expandafter\def\csname LTa\endcsname{\color{black}}%
      \expandafter\def\csname LT0\endcsname{\color{black}}%
      \expandafter\def\csname LT1\endcsname{\color{black}}%
      \expandafter\def\csname LT2\endcsname{\color{black}}%
      \expandafter\def\csname LT3\endcsname{\color{black}}%
      \expandafter\def\csname LT4\endcsname{\color{black}}%
      \expandafter\def\csname LT5\endcsname{\color{black}}%
      \expandafter\def\csname LT6\endcsname{\color{black}}%
      \expandafter\def\csname LT7\endcsname{\color{black}}%
      \expandafter\def\csname LT8\endcsname{\color{black}}%
    \fi
  \fi
    \setlength{\unitlength}{0.0500bp}%
    \ifx\gptboxheight\undefined%
      \newlength{\gptboxheight}%
      \newlength{\gptboxwidth}%
      \newsavebox{\gptboxtext}%
    \fi%
    \setlength{\fboxrule}{0.5pt}%
    \setlength{\fboxsep}{1pt}%
\begin{picture}(7200.00,2592.00)%
    \gplgaddtomacro\gplbacktext{%
      \csname LTb\endcsname%
      \put(421,186){\makebox(0,0){\strut{}$1$}}%
      \csname LTb\endcsname%
      \put(753,186){\makebox(0,0){\strut{}$2$}}%
      \csname LTb\endcsname%
      \put(1085,186){\makebox(0,0){\strut{}$3$}}%
      \csname LTb\endcsname%
      \put(1417,186){\makebox(0,0){\strut{}$4$}}%
      \csname LTb\endcsname%
      \put(1749,186){\makebox(0,0){\strut{}$5$}}%
      \csname LTb\endcsname%
      \put(2080,186){\makebox(0,0){\strut{}$6$}}%
      \csname LTb\endcsname%
      \put(2412,186){\makebox(0,0){\strut{}$7$}}%
      \csname LTb\endcsname%
      \put(2744,186){\makebox(0,0){\strut{}$8$}}%
      \csname LTb\endcsname%
      \put(3076,186){\makebox(0,0){\strut{}$9$}}%
      \csname LTb\endcsname%
      \put(3408,186){\makebox(0,0){\strut{}$10$}}%
      \csname LTb\endcsname%
      \put(3740,186){\makebox(0,0){\strut{}$11$}}%
      \csname LTb\endcsname%
      \put(4072,186){\makebox(0,0){\strut{}$12$}}%
      \csname LTb\endcsname%
      \put(4404,186){\makebox(0,0){\strut{}$13$}}%
      \csname LTb\endcsname%
      \put(4736,186){\makebox(0,0){\strut{}$14$}}%
      \csname LTb\endcsname%
      \put(5068,186){\makebox(0,0){\strut{}$15$}}%
      \csname LTb\endcsname%
      \put(5399,186){\makebox(0,0){\strut{}$16$}}%
      \csname LTb\endcsname%
      \put(5731,186){\makebox(0,0){\strut{}$17$}}%
      \csname LTb\endcsname%
      \put(6063,186){\makebox(0,0){\strut{}$18$}}%
      \csname LTb\endcsname%
      \put(6395,186){\makebox(0,0){\strut{}$19$}}%
      \csname LTb\endcsname%
      \put(6727,186){\makebox(0,0){\strut{}$20$}}%
    }%
    \gplgaddtomacro\gplfronttext{%
      \csname LTb\endcsname%
      \put(421,558){\makebox(0,0){\strut{}0}}%
      \csname LTb\endcsname%
      \put(753,558){\makebox(0,0){\strut{}0}}%
      \csname LTb\endcsname%
      \put(1085,558){\makebox(0,0){\strut{}0}}%
      \csname LTb\endcsname%
      \put(1417,1166){\makebox(0,0){\strut{}249}}%
      \csname LTb\endcsname%
      \put(1749,558){\makebox(0,0){\strut{}0}}%
      \csname LTb\endcsname%
      \put(2080,858){\makebox(0,0){\strut{}123}}%
      \csname LTb\endcsname%
      \put(2412,558){\makebox(0,0){\strut{}0}}%
      \csname LTb\endcsname%
      \put(2744,1012){\makebox(0,0){\strut{}186}}%
      \csname LTb\endcsname%
      \put(3076,558){\makebox(0,0){\strut{}0}}%
      \csname LTb\endcsname%
      \put(3408,2405){\makebox(0,0){\strut{}757}}%
      \csname LTb\endcsname%
      \put(3740,558){\makebox(0,0){\strut{}0}}%
      \csname LTb\endcsname%
      \put(4072,1507){\makebox(0,0){\strut{}389}}%
      \csname LTb\endcsname%
      \put(4404,558){\makebox(0,0){\strut{}0}}%
      \csname LTb\endcsname%
      \put(4736,1610){\makebox(0,0){\strut{}431}}%
      \csname LTb\endcsname%
      \put(5068,619){\makebox(0,0){\strut{}25}}%
      \csname LTb\endcsname%
      \put(5399,1141){\makebox(0,0){\strut{}239}}%
      \csname LTb\endcsname%
      \put(5731,629){\makebox(0,0){\strut{}29}}%
      \csname LTb\endcsname%
      \put(6063,1251){\makebox(0,0){\strut{}284}}%
      \csname LTb\endcsname%
      \put(6395,717){\makebox(0,0){\strut{}65}}%
      \csname LTb\endcsname%
      \put(6727,590){\makebox(0,0){\strut{}13}}%
    }%
    \gplbacktext
    \put(0,0){\includegraphics{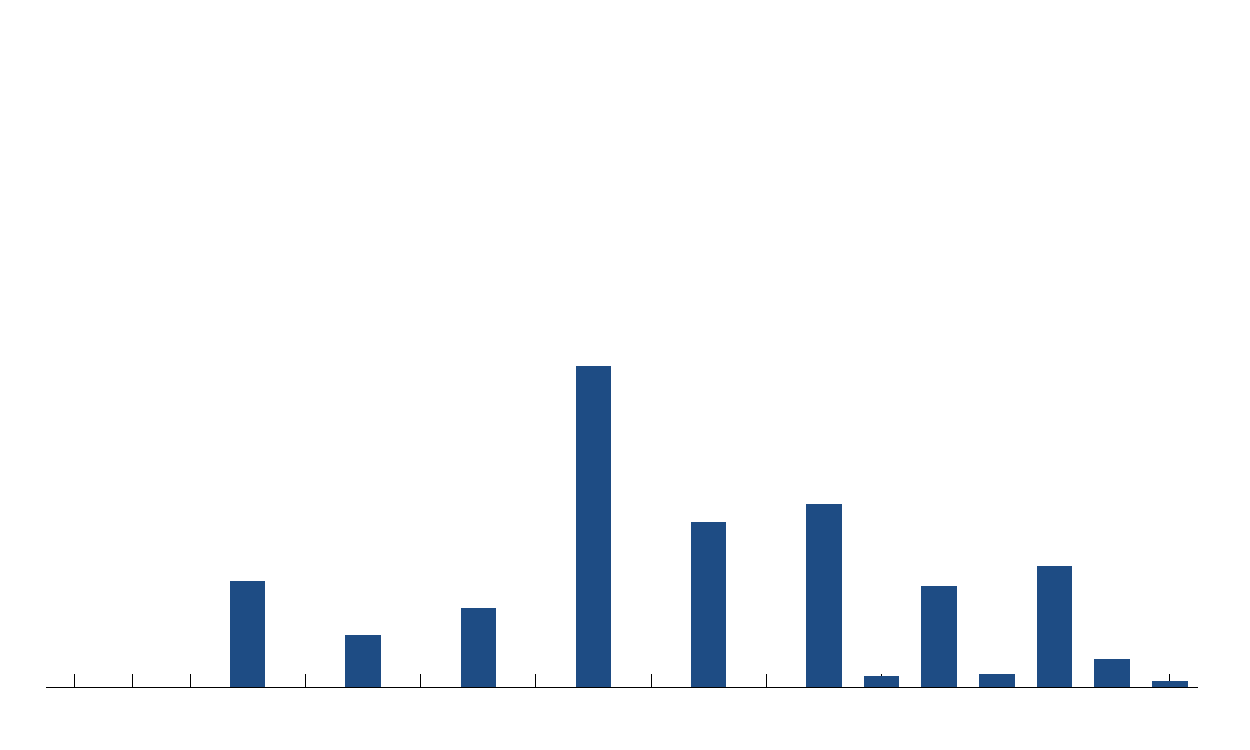}}%
    \gplfronttext
  \end{picture}%
\endgroup
 
  \caption{Occurence of each possible values of the Picard number in the dataset}
\label{fig:picrk}
\end{figure}

\section{Computing the coordinates of the polarization}\label{sec:completing_homology}

Let $X = Z(f_X) \subset \Pp^{n+1}$ be a smooth hypersurface of degree $d$ and assume $n$ is even. We compute a basis for the middle integral homology $\H_n(X,\Zz)$ by carrying over a basis from a hypersurface of Fermat type \cite[\S1.3]{sertoz18}. If $h_X = [X \cap H]$ denotes the hyperplane class in $X$, then $h_X^{n/2} \in \H_n(X,\Zz)$ is the polarization. The orthogonal complement of $h_X^{n/2}$ is the \emph{primitive homology}, denoted $P\H_{n}(X,\Zz)$. In order to compute the periods of $X$, it is sufficient to work only with the primitive homology $P\H_n(X,\Zz)$ as is done in \cite{sertoz18}. In \S4.5 of \emph{loc.\ cit.}\ there is a sketch on how to complete the given basis for the primitive homology to a basis of homology. In this section we flesh out the details as the particular choices we make in completing the basis determine the coordinates of the polarization. The problem that must be addressed is that $h_X^{n/2}$ and $P\H_n(X,\Zz)$ do not generate $\H_n(X,\Zz)$ but a full rank sublattice. 

In \cite{sertoz18} the Fermat surface $Y=Z(x_0^d+\dots+x_{n}^d - x_{n+1}^d)$ was used for the construction of a basis of primitive homology. This basis is formed using a \emph{Pham cycle} and the Pham cycle itself is formed by gluing translates of the following simplex:
\[
  D = \{[s_0: s_1 : \dots : s_n : 1] \mid s_i \in [0,1],\, s_0^d + \dots + s_n^d = 1\} \subset Y.
\]
For $\beta=(\beta_0,\dots,\beta_{n+1}) \in \Zz^{n+2}$ we define the translations $t^\beta \colon \Pp^{n+1} \to \Pp^{n+1}$ by the action on the coordinates $x_i \mapsto \xi^{\beta_i}x^i$. Then the Pham cycle $S$ is defined by:
\[
  S = (1-t_0^{-1})\cdots (1-t_n^{-1}) D,
\]
where summation is union and negation is change of orientation~\cite{pham--fermat}. It is possible to compute a subset $B \subset \Zz^{n+2}$ for which the set $\{t^\beta S\}_{\beta \in B}$ is a basis for the primitive homology $P\H_n(Y,\Zz)$, for instance, use Corollary~4.8~\cite{sertoz18}. We will now add one more cycle to complete $\{t^{\beta} S\}_{\beta \in B}$ to a basis of homology.

With $d$ being the degree of $X$ and $Y$, we denote the $d$-th root of $-1$ by $\eta := \exp(\frac{\pi\sqrt{-1}}{d})$ and the $d$-th root of $1$ by $\xi := \exp(\frac{2\pi\sqrt{-1}}{d})$. 
Let $\Pp^{n/2}$ be a projective space with coordinate functions $\mu_0,\dots,\mu_{n/2}$ and consider the linear map $\Pp^{n/2} \to \Pp^{n+1}$ defined by
\begin{align}
  x_{2k} = \mu_k,\,\, x_{2k+1} = \eta \mu_k & \quad k=0,\dots,\frac{n}{2}-1, \\
  x_n = \mu_{\frac{n}{2}} ,\,\, x_{n+1} = \mu_{\frac{n}{2}}. & \nonumber
\end{align}
The image of this map is a linear space $L$ which is evidently contained in $Y$. Let $[L]$ be the homology class of $L$ and let $\gamma_{\beta}$ be the homology class of $t^{\beta} S$. The set $\{[L]\} \cup \{\gamma_\beta\}_{\beta \in B}$ is a basis for the integral homology $\H_n(X,\Zz)$.

As $Y$ is deformed into $X$, the homology class of $L$ will typically deform into a class which no longer supports an algebraic subvariety and therefore this class will typically have non-zero periods. Nevertheless, we can deduce the periods of $L$ as it deforms based on the following two observations: The polarization $h_Y^{n/2}$ deforms in to $h_X^{n/2}$ and will always remain algebraic throughout the deformation. We will know the periods of the Pham basis $\{t^\beta S\}_{\beta \in B}$ as it deforms. 

The homology with rational coefficients $\H_n(Y,\Qq)$ splits into the direct sum $P\H_n(Y,\Qq)\oplus \Qq\langle h_Y^{n/2} \rangle$ so that we may write:
\begin{equation}\label{eq:LinB}
  [L] = \frac{1}{d} h_{Y}^{\frac{n}{2}} + \sum_{\beta \in B} a_{\beta} \gamma_\beta.
\end{equation}
The coefficients $\{a_{\beta}\}_{\beta \in B} \subset \Qq$ of this relation remain constant as we carry the basis $\{[L]\} \cup \{\gamma_\beta\}_{\beta \in B}$ to a basis of $\H_n(X,\Zz)$. The problem of computing the periods of $L$ as it deforms is therefore reduced to computing the coefficients $\{a_\beta\}_{\beta \in B}$.  Put an ordering on $B$ and let 
\begin{equation}\label{eq:abl}
  a_{B,L} = (a_{\beta})_{\beta \in B} \in \Qq^{\#B} 
\end{equation}
denote the row vector of coefficients defined in (\ref{eq:LinB}).

  Let $b_{B,L} = ([L] \cdot \gamma_\beta)_{\beta \in B} \in \Qq^{\#B}$ be the intersection numbers of $L$ with the Pham basis and let $M_{B} = (\gamma_{\beta} \cdot \gamma_{\beta'})_{\beta,\beta' \in B}$ be the matrix of intersections of the Pham basis. We see that $a_{B,L} = M_B^{-1} b_{B,L}$ so it remains to compute $M_{B}$ and $b_{B,L}$.

Fix $d \ge 2$ and define a function $\chi\colon \Zz \to \{-1,0,1\}$ as follows:
\[
  \chi(b) = \begin{cases}
    1 & \text{if } b = 0 \mod d\\
    -1 & \text{if } b \equiv 1 \mod d  \\
    0 & \text{if } b \not\equiv 0,1 \mod d.
  \end{cases} 
\]

\begin{proposition}\label{prop:intersection}
  For $\beta=(\beta_i)_{i=0}^{n+1}, \beta'=(\beta'_i)_{i=0}^{n+1} \in \Zz^{n+2}$ let $\beta'' = (\beta_i-\beta_i' - \beta_{n+1}+\beta_{n+1}')_{i=0}^{n+1}$. The Pham cycles $t^\beta S$ and $t^{\beta'} S$ intersect as follows:
  \[
    \langle t^{\beta} S , t^{\beta'} S \rangle = (-1)^{\frac{(n+1)n}{2}} \left( \prod_{b \in \beta''} \chi(b) - \prod_{b \in \beta''} \chi(b+1) \right).
  \]
\end{proposition}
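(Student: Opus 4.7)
The strategy is to expand both Pham cycles as signed sums of translated simplices and invoke Pham's classical chain-level intersection calculus for the Fermat hypersurface \cite{pham--fermat}.

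First, expand
\[
  t^\beta S = \sum_{I \subseteq \{0,\ldots,n\}} (-1)^{|I|}\, t^{\beta - \mathbf{1}_I} D,
\]
where $\mathbf{1}_I \in \Zz^{n+2}$ is the characteristic vector of $I$, with its $(n+1)$-coordinate equal to zero. By bilinearity of the intersection pairing,
\[
  \langle t^\beta S, t^{\beta'} S\rangle = \sum_{I, J} (-1)^{|I|+|J|} \langle t^{\beta - \mathbf{1}_I} D,\, t^{\beta' - \mathbf{1}_J} D\rangle.
\]
Since the diagonal subgroup $\Zz(1,\ldots,1) \subset \Zz^{n+2}$ acts trivially on $\Pp^{n+1}$, each chain intersection on the right depends only on the class of $(\beta - \beta') - \mathbf{1}_I + \mathbf{1}_J$ modulo the diagonal. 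Extracting $\beta_{n+1} - \beta'_{n+1}$ from every coordinate yields the vector $\beta''$ of the statement, shifted by $-\mathbf{1}_I + \mathbf{1}_J$ in the first $n+1$ coordinates and with $(n+1)$-coordinate identically zero.

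Next, I would apply Pham's chain-level intersection formula for translates of the standard simplex $D$ on the Fermat variety. The simplex $D$ lies in the positive real locus in the affine chart $x_{n+1} = 1$, its boundary is contained in the coordinate hyperplanes $\{x_i = 0\}$, and two translates $t^\gamma D$ and $D$ can share points only along common faces prescribed by stringent arithmetic constraints on the components of $\gamma$ modulo $d$. A careful local orientation analysis—using the complex orientation of $Y$ restricted to the real simplex $D$—produces an explicit formula whose global sign is $(-1)^{n(n+1)/2}$. Substituting into the $(I, J)$ double sum and reorganizing using that $\mathbf{1}_I(i) - \mathbf{1}_J(i) \in \{-1, 0, 1\}$ for $i \leq n$, one collapses the double sum so that only the two extremal configurations survive, contributing respectively the terms $\prod_b \chi(b)$ and $-\prod_b \chi(b+1)$.

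The main obstacle is the chain-level intersection itself. Since $D$ has nonempty boundary, $\langle t^\gamma D, D\rangle$ is only meaningful after regularization (either by perturbing one chain into transverse position or by using the algebraic structure of the Pham cycle to cancel boundary contributions). The delicate identification of the contributing configurations $(I, J)$, the local signs at each shared facet, and the global orientation sign $(-1)^{n(n+1)/2}$ arising from the complex orientation of $Y$—together constitute the Pham--Picard--Lefschetz computation at the heart of the argument, and the combinatorial verification that the $2^{n+1}\times 2^{n+1}$ sum telescopes down to exactly the two terms $\prod_b \chi(b)$ and $-\prod_b \chi(b+1)$ requires careful bookkeeping.
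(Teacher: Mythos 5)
Your proposal is an outline rather than a proof: every step that actually carries the content of the proposition is deferred. The paper itself does not reprove this statement either --- it invokes the classical computation (citing Arnold, Looijenga, and Movasati, in the normalization of Degtyarev--Shimada adapted to the conventions of \cite{sertoz18}) --- so the relevant comparison is whether your sketch, taken on its own, closes the argument. It does not. The expansion $t^\beta S=\sum_I(-1)^{|I|}t^{\beta-\mathbf{1}_I}D$ and the reduction modulo the diagonal (which is where the vector $\beta''$ comes from) are correct but essentially bookkeeping; the substance is (a) defining and evaluating the regularized chain-level intersections $\langle t^{\gamma}D, D\rangle$ for a simplex $D$ with boundary, including the local orientation signs, and (b) showing that these values assemble into exactly $(-1)^{n(n+1)/2}\bigl(\prod_b\chi(b)-\prod_b\chi(b+1)\bigr)$. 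You explicitly leave both to ``careful bookkeeping'' and a ``Pham--Picard--Lefschetz computation'', which is precisely the assertion to be proved, so the argument is circular at its core.

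Moreover, the one mechanism you do commit to --- that the $2^{n+1}\times 2^{n+1}$ double sum over $(I,J)$ ``collapses so that only the two extremal configurations survive,'' one giving $\prod_b\chi(b)$ and the other $-\prod_b\chi(b+1)$ --- misdescribes where the two products come from in the standard proofs. In Pham's and the subsequent treatments, the Fermat cycle is a join of one-variable vanishing cycles $[\xi^{a}]-[\xi^{a+1}]$, the Seifert (variation) form is multiplicative under joins, and the intersection form is recovered from the Seifert form via an identity of the shape $V+(-1)^nV^{t}$ up to a global sign; the two products are the two Seifert-form contributions, each of which already aggregates many $(I,J)$ terms, and the factor $\chi$ encodes the one-variable Seifert pairing coordinatewise. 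Without either carrying out that join/Seifert argument or performing the direct simplex-by-simplex computation with its regularization and signs, the proposal does not establish the formula; to make it acceptable you should either supply those computations or, as the paper does, reduce the statement to the cited results and verify only the translation of conventions (ordering of coordinates, orientation, and the role of the last coordinate in $\beta''$).
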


For a proof of Proposition~\ref{prop:intersection} see any one of \cite{arnold-1984, movasati--hodge, looijenga-10}. We reformulated the statement here for the choices that were made in \cite{sertoz18} and in the style that was first communicated to us by Degtyarev and Shimada.

  Define the function $\tau_d: \Zz \to \{-1,0,1\}$ where:
  \[
    \tau_d(i) = \left\{ \begin{array}[]{cl}
      1 & i \equiv 1 \imod{2d} \\
      -1 & i \equiv -1 \imod{2d} \\
      0 & \text{otherwise.}
    \end{array}\right.
  \]

\begin{lemma}\label{lem:line}
  The intersection pairing of the linear space $L$ with the translates of the Pham cycle $S$ can be expressed as follows:
  \[
    \langle L,t^\beta S \rangle =\tau_d(2\beta_n - 2\beta_{n+1}-1) \prod_{i=0}^{\frac{n}{2}-1} \tau_d(2\beta_{2i} - 2\beta_{2i+1} + 1).
  \]
\end{lemma}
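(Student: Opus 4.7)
My strategy is to reduce $\langle L, t^\beta S\rangle$ to a product of one-dimensional Pham intersection numbers on Fermat curves, exploiting the product structure of both $L$ and $S$ across pairs of coordinates. First, I would expand the Pham cycle multilinearly,
\begin{equation*}
  t^\beta S = \prod_{i=0}^{n}(1 - t_i^{-1})\, t^\beta D = \sum_{\epsilon \in \{0,1\}^{n+1}} (-1)^{|\epsilon|}\, t^{\beta - \tilde\epsilon} D,
\end{equation*}
with $\tilde\epsilon = (\epsilon_0, \ldots, \epsilon_n, 0)$, so that the problem reduces to evaluating a signed sum of elementary intersections $\langle L, t^\gamma D\rangle$ for $\gamma \in \Zz^{n+2}$.

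Next, I would parametrize those intersections. A point of $L$ (normalized by $\mu_{n/2}=1$) reads $[\nu_0 : \eta\nu_0 : \cdots : \nu_{p-1} : \eta\nu_{p-1} : 1 : 1]$ with $p = n/2$, while a point of $t^\gamma D$ (in the chart $x_{n+1}=1$) reads $[\xi^{\gamma_0 - \gamma_{n+1}} s_0 : \cdots : \xi^{\gamma_n - \gamma_{n+1}} s_n : 1]$ with $s_i \in [0,1]$ and $\sum s_i^d = 1$. The resulting matching equations decouple into independent conditions on each pair of coordinates $(2k, 2k+1)$ for $0 \leq k < p$ and on the final pair $(n, n+1)$. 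Since the Pham operator $\prod_{i=0}^n (1 - t_i^{-1})$ also splits along these pairs, I expect the intersection to factor as
\begin{equation*}
  \langle L, t^\beta S\rangle = I_{n,n+1}(\beta)\cdot\prod_{k=0}^{p-1} I_{2k,2k+1}(\beta),
\end{equation*}
where each $I_{ij}$ is a one-dimensional Pham-type intersection depending only on $\beta_i - \beta_j$ and on whether the corresponding $L$-relation carries a factor of $\eta$.

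Third, I would evaluate each factor by tracking the phase. For a pair $(2k, 2k+1)$ with $k < p$, the $L$-relation $x_{2k+1} = \eta x_{2k}$ together with $s_{2k}, s_{2k+1} \in [0,1]$ forces $\eta\,\xi^{\beta_{2k}-\epsilon_{2k}-\beta_{2k+1}+\epsilon_{2k+1}} \in \Rr_{\geq 0}$; the signed sum over $\epsilon_{2k}, \epsilon_{2k+1} \in \{0,1\}$ then collapses to $\tau_d(2\beta_{2k} - 2\beta_{2k+1} + 1)$. For the last pair $(n, n+1)$, the relation $x_n = x_{n+1}$ is $\eta$-free and the factor $(1 - t_{n+1}^{-1})$ is absent from $S$; these two asymmetries conspire to produce $\tau_d(2\beta_n - 2\beta_{n+1} - 1)$ instead. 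Assembling the factors yields the claimed formula.

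The main obstacle is making the factorization genuinely rigorous. The individual intersections $L \cap t^\gamma D$ are non-transverse: for generic $\gamma$ they degenerate to the single boundary point $[0:\cdots:0:1:1]$ rather than meeting the interior of the simplex. One must therefore either perturb $L$ within its homology class to a cycle meeting the interior of each cell transversally, or invoke a toric-combinatorial identity that accounts for the boundary contributions through the alternating sum. The bookkeeping that produces the $+1$ versus $-1$ shifts in the two types of $\tau_d$-factors is a finite but delicate orientation check, most easily carried out by verifying the formula on a one-dimensional Fermat curve base case and then propagating through the product.
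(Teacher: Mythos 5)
There is a genuine gap: what you have written is a plan that correctly predicts the shape of the answer and correctly locates the difficulties, but it does not resolve them, and the unresolved points are exactly where the content of the lemma lies. First, the cell-by-cell computation you set up degenerates completely: as you yourself observe, the conditions $x_{2k+1}=\eta\,x_{2k}$ force $s_{2k}s_{2k+1}=0$ on each pair and $x_n=x_{n+1}$ forces $s_n=1$, so $L$ meets every translate $t^\gamma D$ only in boundary faces shared by many cells. Hence none of the local intersection numbers in your signed sum is defined until you construct an explicit perturbation of $L$ (or of the Pham chains) within its homology class and compute the local signs of the perturbed intersections; this is precisely the hard step and it is only named, not carried out. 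Second, the factorization $\langle L,t^\beta S\rangle=I_{n,n+1}(\beta)\prod_k I_{2k,2k+1}(\beta)$ is justified by appeal to a product structure that does not exist: the Fermat hypersurface is not a product of Fermat curves (the single equation $\sum_i s_i^d=1$ couples all coordinate pairs), and neither $L$ nor $S$ is a product cycle, so the proposed strategy of "verifying on a one-dimensional Fermat curve and propagating through the product" has no literal meaning. Making that splitting rigorous --- for instance through the inductive join structure of Fermat varieties or through a local model at the degenerate intersection points --- is the second missing idea, and it is exactly this analysis that must produce the asymmetric $+1$ and $-1$ shifts inside $\tau_d$ and the overall sign, which you defer to an unspecified "orientation check."

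For comparison, the paper does not perform this computation at all: it obtains the lemma as a straightforward application of Theorem~2.2 of \cite{degtyarev-16}, which provides the intersection numbers of such linear cycles with translates of the Pham cycle in closed form (stated in the normalization used in \cite{sertoz18}). Your outline is, in effect, a sketch of what a self-contained proof of that cited theorem would require; to turn it into a proof you would need either to supply the perturbation-and-orientation argument explicitly, or to do as the paper does and invoke the known formula.
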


\noindent Lemma~\ref{lem:line} is proven by a straightforward application of Theorem 2.2 in \cite{degtyarev-16}.

\begin{example}
Let us consider quartic surfaces in $\ppp$, that is $d=4$ and $n=2$. Using Corollary~4.8~\cite{sertoz18} we find
\begin{multline*}
  B=\{ 
      ( 0, 0, 0, 0 ), ( 0, 0, 1, 0 ), ( 0, 1, 0, 0 ), ( 1, 0, 0, 0 ), ( 0, 0, 2, 0 ), ( 0, 1, 1, 0 ), ( 1, 0, 1, 0 ), \\
    ( 0, 2, 0, 0 ), ( 1, 1, 0, 0 ), ( 2, 0, 0, 0 ), ( 0, 1, 2, 0 ), ( 1, 0, 2, 0 ), ( 0, 2, 1, 0 ), ( 1, 1, 1, 0 ), \\
  ( 2, 0, 1, 0 ), ( 1, 2, 0, 0 ), ( 2, 1, 0, 0 ), ( 0, 2, 2, 0 ), ( 1, 1, 2, 0 ), ( 2, 0, 2, 0 ), ( 1, 2, 1, 0 ) 
\}.
\end{multline*}
With respect to this basis, and the ordering presented above, we find that the vector $a_{B,L}$ of \eqref{eq:abl} is given by
\begin{equation}
  a_{B,L} = (0, -1, \tfrac12, 0, 0, \tfrac12, -1, 0, \tfrac12, 0, \tfrac34, \tfrac14, -\tfrac14, -\tfrac12, -\tfrac14, -\tfrac34, \tfrac14, 0, 0, \tfrac12, -\tfrac12).
\end{equation}
The set $\{\gamma_\beta\}_{\beta\in B}$ is completed to a basis with the addition of $[L]$. In this basis, Equation~\eqref{eq:LinB} gives us the coordinates of the polarization:
\begin{equation}
  h_X = (0, 4, -2, 0, 0, -2, 4, 0, -2, 0, -3, -1, 1, 2, 1, 3, -1, 0, 0, -2, 2, 4).
\end{equation}
\end{example}

\bibliographystyle{siamplain}
\bibliography{my,maths}

\end{document}